%
%
\documentclass[a4paper,12pt,twoside,leqno,final]{amsart}
\usepackage{amsmath}
\usepackage{amssymb}

\setlength{\textwidth}{15cm}
\setlength{\textheight}{22cm}
\setlength{\oddsidemargin}{2cm}
\setlength{\hoffset}{-2cm}
\setlength{\voffset}{-1cm}

\newtheorem{thm}{Theorem}[section]
\newtheorem{lem}[thm]{Lemma}

\newtheorem{cor}[thm]{Corollary}

\newtheorem*{tha}{Theorem A}
\newtheorem*{thb}{Theorem B}
\newtheorem*{thc}{Theorem C}

\newcommand{\C}{{\mathbb C}}
\newcommand{\D}{{\mathbb D}}
\newcommand{\R}{{\mathbb R}}
\newcommand{\T}{{\mathbb T}}
\newcommand{\Z}{{\mathbb Z}}
\newcommand{\N}{{\mathbb N}}

\newcommand{\J}{{\mathcal J}}

\newcommand{\bmo}{{\rm BMO}}
\newcommand{\bmoa}{{\rm BMOA}}

\newcommand{\Al}{A^\alpha}

\newcommand{\HN}{H^\infty_N}
\newcommand{\hnone}{H^\infty_{n+1}}

\newcommand{\M}{{\mathcal{M}}}
\newcommand{\La}{\Lambda}

\renewcommand{\sb}{\subset}
\newcommand{\dist}{\operatorname{dist}}
\newcommand{\eps}{\varepsilon}

\newcommand{\f}{\frac}
\newcommand{\ov}{\overline}
\newcommand{\al}{\alpha}
\newcommand{\be}{\beta}

\newcommand{\de}{\delta}
\newcommand{\la}{\lambda}
\newcommand{\ze}{\zeta}
\renewcommand{\th}{\theta}

\newcommand{\ph}{\varphi}
\newcommand{\om}{\omega}
\newcommand{\Om}{\Omega}
\newcommand{\Omte}{\Om(\th,\eps)}

\newcommand{\const}{\text{\rm const}}

\newcommand{\specth}{\text{\rm spec}\,\th}

\numberwithin{equation}{section}

\input cyracc.def
\font\tencyr=wncyr10
\def\cyr{\tencyr\cyracc}

\title[Blaschke products and nonideal ideals]
{Blaschke products and nonideal ideals\\ 
in higher order Lipschitz algebras}
\dedicatory{To Victor Petrovich Havin, with admiration (best phrased as a palindrome):\\
{\cyr vot pedagog adeptov!}} 
\author{Konstantin M. Dyakonov}
\address{ICREA and Universitat de Barcelona, Departament de Matem\`atica 
Aplicada i An\`alisi, Gran Via 585, E-08007 Barcelona, Spain}
\email{dyakonov@mat.ub.es}
\keywords{Inner functions, Blaschke products, Lipschitz spaces, ideals} 
\subjclass[2000]{30D50, 30D55, 46J15, 46J20.} 
\thanks{Supported in part by grant MTM2008-05561-C02-01 from El Ministerio de Ciencia 
e Innovaci\'on (Spain) and grant 2009-SGR-1303 from AGAUR (Generalitat de Catalunya).}

\begin{document}
\begin{abstract}
We investigate certain ideals (associated with Blaschke products) of the analytic Lipschitz 
algebra $\Al$, with $\al>1$, that fail to be \lq\lq ideal spaces". The latter means that 
the ideals in question are not describable by any size condition on the function's modulus. 
In the case where $\al=n$ is an integer, we study this phenomenon for the algebra 
$H^\infty_n=\{f:f^{(n)}\in H^\infty\}$ rather than for its more manageable Zygmund-type 
counterpart. This is based on a new theorem concerning the canonical factorization 
in $H^\infty_n$. 
\end{abstract}

\maketitle

\section{Introduction and results}

Let $H^\infty$ stand for the algebra of bounded analytic functions on the disk 
$\D:=\{z\in\C:|z|<1\}$. Recall that a function $\th\in H^\infty$ is said to be {\it inner} 
if $|\th(\ze)|=\lim_{r\to1^-}|\th(r\ze)|=1$ at almost all points $\ze$ of $\T:=\partial\D$. 
Given $f\in H^\infty$ and an inner function $\th$, we trivially have $f\th^k\in H^\infty$ 
for every $k$ in $\N:=\{1,2,\dots\}$. Our plan is to show how little of this remains 
true -- and to discuss the subtleties that arise -- when $H^\infty$ gets 
replaced by a smaller algebra whose members are suitably smooth up to $\T$, 
especially when the order of smoothness exceeds 1. Specifically, we shall be concerned 
with the analytic {\it Lipschitz spaces}, to be defined in a moment. 
\par For $\al>0$, we write $\Al$ for the set of those $f\in H^\infty$ which satisfy 
\begin{equation}\label{eqn:mthder}
|f^{(m)}(z)|=O\left((1-|z|)^{\al-m}\right),\qquad z\in\D, 
\end{equation}
for some (any) integer $m$ with $m>\al$; here $f^{(m)}$ is the $m$th order derivative 
of $f$. 
\par It is well known that \eqref{eqn:mthder} does not actually depend on the choice of $m$, 
as long as $m>\al$, except for the constant in the $O$-condition. Also, for $0<\al<1$, a 
classical theorem of Hardy and Littlewood tells us that $\Al$ is formed by precisely those 
analytic functions $f$ on $\D$ which obey the Lipschitz condition of order $\al$, i.\,e., 
\begin{equation}\label{eqn:lipond}
|f(z)-f(w)|\le C|z-w|^\al,\qquad z,w\in\D,
\end{equation}
with some fixed $C=C_f>0$. Similarly, in the case $\al\in(0,\infty)\setminus\N$, 
an analytic function $f$ will be in $\Al$ if and only if $f^{(n)}$ 
satisfies the Lipschitz condition of order $\al-n$, 
where $n=[\al]$ is the integral part of $\al$. The space $A^1$ is known as the analytic 
{\it Zygmund class}; the higher order Zygmund classes $A^n$ (i.\,e., the $\Al$-spaces 
with $\al=n\in\{2,3,\dots\}$) are related to it by the formula $A^n=\{f:f^{(n-1)}\in A^1\}$. 

\par Furthermore, we shall be dealing with the algebras 
$$H^\infty_n:=\{f\in H^\infty:\,f^{(n)}\in H^\infty\},\qquad n\in\N.$$ 
Of course, $H^\infty_1$ coincides with the set of those $f\in H^\infty$ which satisfy the 
Lipschitz condition \eqref{eqn:lipond} with $\al=1$. Let us also recall that $H^\infty_n$ is 
properly contained in $A^n$, for each $n$. 

\par Besides, we need the \lq real variable' Lipschitz--Zygmund spaces $\La^\al=\La^\al(\T)$, 
which actually consist of complex-valued functions and can be defined by $\La^\al:=\Al+\ov{\Al}$. 
An equivalent, and more traditional, definition is as follows: for $0<\al<\infty$, the space 
$\La^\al$ consists of the functions $f\in C(\T)$ such that 
$$\|\Delta_h^mf\|_\infty=O(|h|^\al),\qquad h\in\R,$$ 
where $\|\cdot\|_\infty$ is the $\sup$-norm on $\T$, $m$ is an integer with $m>\al$, 
and $\Delta_h^m$ stands for the $m$th order difference operator with step $h$. (As usual, the difference 
operators $\Delta_h^k$ are defined by induction: one puts $(\Delta_h^1f)(\ze):=f(e^{ih}\ze)-f(\ze)$ and 
$\Delta_h^kf=\Delta_h^1\Delta_h^{k-1}f$.) Finally, let us observe that $\La^\al$ is an algebra, for 
each $\al>0$, and so is $\Al=\La^\al\cap H^\infty$. 

\par Now let $\al\in(0,\infty)$ and fix a function $h\in H^\infty$. This done, consider the set 
\begin{equation}\label{eqn:defialh}
\mathcal I=\mathcal I(\al,h):=\{f\in\Al:\,fh\in\Al\}.
\end{equation}
Clearly, $\mathcal I$ is an {\it ideal} of the algebra $\Al$. Indeed, it is a linear (possibly 
nonclosed) subspace thereof, and one has $fg\in\mathcal I$ whenever $f\in\mathcal I$ 
and $g\in\Al$. Our aim is to study the ideals $\mathcal I(\al,h)$ that arise when $h=\th^k$, 
with $\th$ inner and $k\in\N$. Later on, we shall also look at similar ideals in $H^\infty_n$, 
but let us stick to the $\Al$ case for the time being. 

\par Two questions will be addressed. The first of these concerns the relationship 
between $\mathcal I(\al,\th^k)$ and $\mathcal I(\al,\th^l)$ 
for $k\ne l$. Secondly, we ask whether the functions $f$ from $\mathcal I(\al,\th^k)$ can 
be nicely described in terms of their moduli. After all, since a nontrivial inner function 
$\th$ is highly discontinuous at some points of $\T$, one feels that the inclusion $f\th^k\in\Al$ 
can only hold if $|f|$ becomes appropriately small near the singular set of $\th$. Precisely speaking, we 
want to know if/when $\mathcal I(\al,\th^k)$ is an \lq\lq ideal space", or rather an {\it ideal subspace} 
of $\Al$, a (fairly standard) notion that we shall now recall. Note, however, the new meaning attached 
to the word \lq\lq ideal". 

\par Suppose $X$ is a subspace, not necessarily closed, of a function space $Y$. 
We say that $X$ is an {\it ideal subspace} of $Y$ if, given any $f\in X$ and 
any $g\in Y$ with $|g|\le|f|$, it follows that $g\in X$. Roughly speaking, 
this means that the elements of $X$ are describable, among all functions in $Y$, by a certain 
\lq\lq size condition" on the function's modulus. Of course, the inequality $|g|\le|f|$ in the above 
definition is supposed to hold everywhere -- or perhaps almost everywhere -- on the underlying set. 
Throughout this paper, the bigger space $Y$ is taken to be either $\Al$ or $H^\infty_n$, so the set 
in question is $\D$. 

\par There seems to be no chance of confusion between the adjective \lq\lq ideal", as used 
in the preceding paragraph, and the noun \lq\lq ideal" that appears elsewhere, e.\,g., in the 
sentence following \eqref{eqn:defialh}. Moreover, we shall repeatedly refer to {\it ideal ideals} 
in $\Al$ or in $H^\infty_n$ (these are, by definition, ideals of the corresponding algebra that are also 
ideal subspaces thereof) and to {\it nonideal ideals} (i.\,e., the ones that fail to be ideal subspaces). 
In particular, \lq\lq nonideal" is always an adjective. 

\par The following theorem, to be found in \cite{DSpb, DAmer}, provides a criterion for a function 
$f\in\Al$ to be multipliable or divisible by (a power of) an inner function $\th$. See also 
\cite{DActa, DAdv, Dyn2} for alternative versions and approaches. The criterion will be stated 
in terms of a decrease condition to be satisfied by $f$ along the set 
$$\Omte:=\{z\in\D:|\th(z)|<\eps\},\qquad0<\eps<1.$$ 

\begin{tha} Let $0<\al<\infty$ and let $m$ be an integer with $m>\al$. Given $f\in\Al$ and 
an inner function $\th$, the following conditions are equivalent. 

\smallskip{\rm (i.A)} $f\th^m\in\Al$. 

\smallskip{\rm (ii.A)} $f\th^k\in\La^\al$ for all $k\in\Z$. 

\smallskip{\rm (iii.A)} $f/\th^m\in\La^\al$. 

\smallskip{\rm (iv.A)} For some $\eps\in(0,1)$, one has 
$$|f(z)|=O\left((1-|z|)^\al\right),\qquad z\in\Omte.$$ 
\end{tha}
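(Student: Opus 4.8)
The plan is to isolate the one implication that carries all the content and to dispatch the rest routinely. Two of the equivalences are immediate: taking $k=m$ in (ii.A) gives $f\th^m\in\La^\al$, and since $f\th^m\in H^\infty$ automatically while $\Al=\La^\al\cap H^\infty$, this is (i.A); taking $k=-m$ in (ii.A) is exactly (iii.A). So it remains to prove (iv.A)$\Rightarrow$(ii.A) and, conversely, (i.A)$\Rightarrow$(iv.A) and (iii.A)$\Rightarrow$(iv.A).

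The crux is (iv.A)$\Rightarrow$(ii.A), which I would attack by a pseudoanalytic (Dyn'kin-type) extension argument. Recall that a function on $\T$ lies in $\La^\al$ precisely when it extends to a neighbourhood of $\T$ in $\C$ as a bounded function $H$, smooth off $\T$, with $|\dbar H(z)|=O(\dist(z,\T)^{\al-1})$ near $\T$ (for non-integer $\al$; for $\al=n\in\N$ the correct formulation is phrased via the Besov/Lipschitz scales $\bsp,\lsp$). Take an extension $\tilde f$ of $f\in\Al$, equal to $f$ on $\ov{\D}$, and extend $\th$ by reflection, $\Theta(z):=1/\ov{\th(1/\bar z)}$ for $|z|>1$; then $\Theta$ is holomorphic off the reflected singular set of $\th$, agrees with $\th$ on $\T$, and $|\Theta|$ blows up near that reflected set. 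For $k\ge0$ put $H:=\tilde f\,\Theta^k$. Since $\dbar\Theta=0$ where $\Theta$ is holomorphic, $\dbar H=0$ inside $\D$, and all the work is outside $\D$: one must arrange, using (iv.A), that there $\tilde f$ decays fast enough near the reflected singular set of $\th$ to absorb the growth of $\Theta^k$, keeping $H$ bounded and preserving the $\dbar$-estimate. This is exactly what $|f(z)|=O((1-|z|)^\al)$ on $\Omte$ supplies — $f$ is already small, at the right rate, on the sublevel sets of $\th$, i.e.\ precisely where $\Theta^k$ is large after reflection — and one builds $\tilde f$ respecting this. The same reflection handles negative powers: for $k<0$ one instead makes $\tilde f$ vanish at the zeros of $\th$ inside $\D$ to order $|k|$ (again a quantitative consequence of (iv.A), as $|f|$ is of size $(1-|z|)^\al$ at those zeros), so that $H=\tilde f\,\Theta^k$ has no interior poles, while $\Theta^k$ is automatically small near the reflected singular set. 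Dyn'kin's criterion then yields $f\th^k\in\La^\al$ for every $k\in\Z$, which is (ii.A).

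For the reverse implications, write $f=(f\th^m)/\th^m$. The function $g:=f\th^m$ lies in $\Al$, hence is continuous on $\ov{\D}$, and since $\th^m$ has full cluster set at every point of $\specth$, $g$ must vanish there; the task is to upgrade this to $|f(z)|=|g(z)|\,|\th(z)|^{-m}=O((1-|z|)^\al)$ on some $\Omte$. I would do this by localizing to pseudohyperbolic disks about points $z_0\in\Omte$ near $\T$, on which $\th$ is, by Schwarz--Pick, comparable to $\th(z_0)$, and balancing the $\La^\al$-smoothness of $g$ near its zeros on $\specth$ against $\dist(z_0,\specth)$, $1-|z_0|$ and $|\th(z_0)|$ before dividing by $|\th(z_0)|^m$. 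The implication (iii.A)$\Rightarrow$(iv.A) is handled the same way, starting from $f\ov{\th}^{\,m}\in\La^\al$.

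The main obstacle is the construction, in (iv.A)$\Rightarrow$(ii.A), of a single extension $\tilde f$ that simultaneously matches $f$ on $\T$, obeys the Dyn'kin $\dbar$-estimate near $\T$, and — away from $\T$ — is small near the reflected singular set of $\th$ (and vanishes at the zeros of $\th$ inside $\D$ when $k<0$) to the precise order needed to tame $\Theta^k$, all while only $\Omte$, rather than a full neighbourhood of $\specth$ in $\ov{\D}$, is under control. Making (iv.A) carry out this job, together with the matching estimate for (i.A)$\Rightarrow$(iv.A), is where the argument turns genuinely technical; and the integer case $\al=n$ is the most delicate, since Dyn'kin's theorem and the entire $\dbar$-calculus must then be run in the Zygmund/Besov framework ($\bsp,\lsp$) rather than the naive Lipschitz one — exactly the sort of complication the rest of the paper has to confront for $\hn$.
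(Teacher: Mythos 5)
A point of order first: the paper does not prove Theorem~A at all --- it imports it from \cite{DSpb, DAmer}, where it is obtained by duality (testing against reproducing kernels of the model space associated with $\th^m$), and mentions the pseudoanalytic--extension proof of \cite{Dyn2} as an alternative route. Your outline follows the Dyn'kin route, which is legitimate in principle, but as written it has two genuine gaps. For (iv.A)$\Rightarrow$(ii.A) you correctly identify that everything hinges on producing an extension $\tilde f$ of $f$ across $\T$ whose $\dbar$-derivative decays, near the reflected sublevel sets of $\th$, fast enough to absorb $|\Theta|^{k}$ --- roughly $|\dbar\tilde f(z)|\lesssim(|z|-1)^{\al-1}\,|\th(1/\bar z)|^{m}$ --- and you then declare this construction ``technical'' and stop. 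That construction \emph{is} the theorem in this approach: nothing in your sketch explains how (iv.A), which controls only $|f|$ and only on $\Omte$, is converted into a bound on $\dbar\tilde f$. The negative-$k$ case is moreover misstated: (iv.A) gives $|f(z_j)|=O\bigl((1-|z_j|)^{\al}\bigr)$ at the zeros $z_j$ of $\th$, which is smallness, not vanishing, so $f/\th^{|k|}$ genuinely has poles in $\D$; since any admissible extension must agree with the given $f$ on $\T$, you cannot ``make $\tilde f$ vanish'' at the $z_j$, and the interior extension of the boundary function $f\bar\th^{|k|}$ has to be built by a different device.

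More seriously, the proposed proof of (i.A)$\Rightarrow$(iv.A) would fail. Your mechanism --- $g:=f\th^{m}$ vanishes on $\specth\cap\T$, so its $\La^\al$-smoothness bounds $|g(z)|$ by a power of $\dist(z,\specth)$, to be compared with $1-|z|$ and then divided by $|\th(z)|^{m}$ --- breaks down quantitatively. Take $\th(z)=\exp\bigl(-(1+z)/(1-z)\bigr)$, so that $\specth=\{1\}$ and $\Omte$ is a horodisk tangent to $\T$ at $1$; it contains points with $|1-z|\asymp(1-|z|)^{1/2}$, so even full vanishing of $g$ and its derivatives at $1$ yields only $|g(z)|=O\bigl((1-|z|)^{\al/2}\bigr)$ there --- half the exponent required --- while the concluding division by $|\th(z)|^{m}$ can only worsen the bound on $\Omte$, where $|\th|<\eps$ (and equals $0$ at any zeros of $\th$, making $|g(z_j)|/|\th(z_j)|^m$ an indeterminate $0/0$). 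The correct proof of this implication does not go through the spectrum at all: it is a duality/reproducing-kernel estimate of the shape $|f(z)|\,\bigl(1-|\th(z)|^{2}\bigr)^{m}\lesssim(1-|z|)^{\al}$, which yields (iv.A) on $\Omte$ because $1-|\th(z)|^{2}\ge1-\eps^{2}$ there. That estimate is precisely the content of \cite{DSpb, DAmer}, and it is the step your outline is missing; note also that the paper leans exactly on this implication (with $\th=B$, $m=n+1$) in the proofs of Theorems~\ref{thm:pervaya} and~\ref{thm:vtoraya}, so it cannot be waved through.
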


Yet another equivalent condition is obtained from (iv.A) upon replacing the word \lq\lq some" 
by \lq\lq each"; see \cite{DSpb} or \cite{DAmer}. 

\par In what follows, we shall restrict our attention to the $\Al$-spaces with $\al\notin\N$. The 
Zygmund classes $A^n$ will not reappear (except, very briefly, in the proof of Theorem \ref{thm:factint}). 
For integral values of the smoothness exponent, we are going to consider the $H^\infty_n$ spaces instead; 
we shall be back to this in a while. 

\par We know from Theorem A that, for $k\in\N\cap(\al,\infty)$, the 
set $\mathcal I(\al,\th^k)$ does not depend on the choice 
of $k$ and is an ideal ideal of $\Al$. Indeed, if $f$ and $g$ are $\Al$-functions with 
$|g|\le|f|$ on $\D$, then $g$ is sure to satisfy (iv.A) whenever $f$ does. Now what about 
$\mathcal I(\al,\th^k)$ in the range $k\in\N\cap(0,\al)$? Of course, the question is meaningful 
for $\al>1$ only, and this time a different picture comes into sight. Namely, we shall 
soon see that the ideals $\mathcal I(\al,\th^k)$ with $1\le k\le[\al]$ may happen to be pairwise 
distinct, all of them strictly larger than $\mathcal I(\al,\th^{[\al]+1})$, and also nonideal. 

\par Meanwhile, we observe that one always has 
$$\mathcal I(\al,\th^k)\sb\mathcal I(\al,\th^l)\quad\text{for}\quad k\ge l\ge1.$$ 
This is due to the so-called {\it $f$-property} (or {\it division property}) of $\Al$ that 
was established by Havin in \cite{H}: whenever $F\in H^\infty$ and $I$ is an inner function with 
$FI\in\Al$, we actually have $F\in\Al$. We mention in passing that the $f$-property, and in 
fact a certain stronger property involving coanalytic Toeplitz operators, was verified in \cite{H} 
for a large number of important \lq\lq smooth analytic classes". This laid the foundation for 
much of subsequent research in the field; see \cite{Shi2} for an overview and further developments. 

\par Now let us recall that every inner function $\th$ can be factored canonically as 
$\th=\la BS$, where $\la$ is a unimodular constant, $B$ is a {\it Blaschke product}, and $S$ is a 
{\it singular inner function}; see \cite[Chapter II]{G}. More explicitly, the factors involved are 
of the form 
$$B(z)=B_{\{z_j\}}(z):=\prod_j\f{\bar z_j}{|z_j|}\f{z_j-z}{1-\bar z_jz},$$ 
where $\{z_j\}\sb\D$ is a sequence (possibly finite or empty) with $\sum_j(1-|z_j|)<\infty$, and 
$$S(z)=S_\mu(z):=\exp\left\{-\int_\T\f{\ze+z}{\ze-z}
d\mu(\ze)\right\},$$
where $\mu$ is a (nonnegative) singular measure on $\T$. The closure of the set 
$\{z_j\}\cup\text{\rm supp}\,\mu$ is called the {\it spectrum of $\th$}; we shall denote it 
by $\specth$. 

\par If $\th=S$ is a singular inner function, then we still have 
$$\mathcal I(\al,\th)=\mathcal I(\al,\th^2)=\dots,$$ 
for each $\al\in(0,\infty)$, just as it happens for an arbitrary inner function in the range 
$\al\in(0,1)$. Moreover, the ideal $\mathcal I(\al,\th)$ is then ideal, as a subspace of $\Al$. 
To verify these claims, write $\th=S_0^m$, where $S_0$ is another singular inner function 
and $m$ is an integer with $m>\al$; then apply Theorem A to $S_0$. 

\par However, Blaschke products -- and hence generic inner functions -- may exhibit a different 
type of behavior. This was discovered by Shirokov (see \cite{Shi1} or \cite[Chapter I]{Shi2}) who 
came up with an ingenious construction of a function $g\in A^\infty:=\bigcap_{0<\al<\infty}\Al$ 
and a Blaschke product $B$ such that $g/B\in A^\infty$, but $gB\notin\bigcup_{\al>1}\Al$. 
It follows then, for any fixed $\al>1$, that the function $f:=g/B$ lies in 
$\mathcal I(\al,B)\setminus\mathcal I(\al,B^2)$. In addition, the ideal $\mathcal I(\al,B)$ is now 
nonideal: indeed, we have $f\in\mathcal I(\al,B)$ and $g\in\Al\setminus\mathcal I(\al,B)$, 
whereas $|g|\le|f|$ on $\D$. 

\par Our current purpose is a more detailed analysis of the (possibly) nonideal ideals of the form 
$\mathcal I(\al,B^k)$, with $1\le k\le n$. Here and below, $\al$ takes values in $(1,\infty)\setminus\N$ 
and $n=[\al]$ is the integral part of $\al$. Furthermore, a special class of Blaschke products will be 
singled out and dealt with. Namely, it will be assumed that $B=B_{\{z_j\}}$ is an {\it $\Al$-interpolating 
Blaschke product}, in the sense that its zeros $z_j$ are all simple and their closure $E:={\rm clos}\{z_j\}$ 
is an {\it $\Al$-interpolating set}. The latter means, in turn, that one can freely prescribe the values 
of an $\Al$-function and its successive derivatives (of order at most $n$) on $E$, within a certain 
natural class of data. The formal definition and a geometric description of such sets will be recalled 
in Section 2 below. In particular, it turns out that the class of $\Al$-interpolating sets (and hence 
Blaschke products) does not depend on $\al$. The sets in question are thus the same as $A^\be$-interpolating 
sets for some (any) $\be\in(0,1)$, and these are easy to define: we call $E$ an $A^\be$-interpolating set 
if every function $\ph:E\to\C$ satisfying 
\begin{equation}\label{eqn:lipone}
|\ph(z)-\ph(w)|\le\const\cdot|z-w|^\be,\qquad z,w\in E,
\end{equation}
can be written as $\ph=f|_E$ for some $f\in A^\be$. 

\par Once $\al$ and the Blaschke product $B$ are fixed, we write $\J_k:=\mathcal I(\al,B^k)$, so that 
$$\J_k=\{f\in\Al:\,fB^k\in\Al\},\qquad k\in\N.$$ 
In view of the above discussion, we always have 
\begin{equation}\label{eqn:chain}
\J_1\supset\dots\supset\J_n\supset\J_{n+1} 
\end{equation}
and 
\begin{equation}\label{eqn:tail}
\J_{n+1}=\J_{n+2}=\dots. 
\end{equation}
The ideals in \eqref{eqn:tail} are ideal subspaces of $\Al$, as we know, but the preceding 
ones (i.\,e., $\J_1,\dots,\J_n$) may well be nonideal. Our first 
result provides a criterion for this to happen, and also for 
the inclusions in \eqref{eqn:chain} to be proper (strict). When stating it, and later on, we shall 
use the notation $d_j$ for the quantity ${\rm dist}\left(z_j,\{z_l\}_{l\ne j}\right)$ associated 
with the zero sequence $\{z_j\}$ of $B$. In other words, 
$$d_j:=\inf\left\{|z_j-z_l|:\,l\in\N\setminus\{j\}\right\}.$$ 

\begin{thm}\label{thm:pervaya} Let $n\in\N$, $n<\al<n+1$, and suppose $B$ is an $\Al$-interpolating 
Blaschke product with zeros $\{z_j\}$. The following are equivalent. 

\smallskip{\rm (i.1)} The ideals $\J_1,\dots,\J_n$ are all nonideal (as subspaces of $\Al$). 

\smallskip{\rm (ii.1)} The inclusions $\J_1\supset\dots\supset\J_n\supset\J_{n+1}$ are all proper. 

\smallskip{\rm (iii.1)} $\J_n$ is nonideal. 

\smallskip{\rm (iv.1)} $\J_{n+1}\ne\J_n$. 

\smallskip{\rm (v.1)} One has 
\begin{equation}\label{eqn:crucon}
\sup_j\f{d_j}{1-|z_j|}=\infty. 
\end{equation}
\end{thm}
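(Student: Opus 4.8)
The natural strategy is a cyclic chain of implications, using the cheap logical inclusions and reserving the genuine work for the two "hard" arrows that involve the geometric condition \eqref{eqn:crucon}. First I observe the trivial implications: (i.1)$\Rightarrow$(iii.1) is immediate, and (ii.1)$\Rightarrow$(iv.1) is immediate. Also (iii.1)$\Rightarrow$(iv.1) follows from the general fact recorded in the excerpt that $\J_{n+1}$ is an ideal subspace (by Theorem A applied with $m=n+1>\al$), so if $\J_n=\J_{n+1}$ then $\J_n$ would be ideal too; contrapositive gives the arrow. The substantive part is therefore to close the loop, and I would do it via (iv.1)$\Rightarrow$(v.1) and (v.1)$\Rightarrow$(i.1) (with a side argument handing us (v.1)$\Rightarrow$(ii.1) along the way, or else deriving (ii.1) from (i.1) separately).

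For (iv.1)$\Rightarrow$(v.1) I would argue by contraposition: assume $\sup_j d_j/(1-|z_j|)<\infty$, i.e. the zeros are "uniformly separated at their own scale" but no more, and show $\J_n=\J_{n+1}$. The point is that under this hypothesis a function $f\in\Al$ with $fB^n\in\Al$ is automatically forced, by local estimates near each $z_j$, to decay at the rate $(1-|z|)^\al$ on a whole neighborhood $\Om(\th,\eps)$ of the spectrum, which by the "each $\eps$" version of (iv.A) in Theorem A yields $fB^{n+1}\in\Al$. Concretely, near a single zero $z_j$ the Blaschke product behaves (up to a bounded invertible factor on a disk of radius comparable to $d_j$) like the single factor $b_j(z)=(\bar z_j/|z_j|)(z_j-z)/(1-\bar z_j z)$; the condition $d_j\lesssim 1-|z_j|$ means these model disks have radius comparable to $1-|z_j|$, so that the sublevel set $\Om(\th,\eps)$ decomposes into pieces each of which is, at scale $1-|z_j|$, a disk on which $B\sim b_j$. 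One then reduces to the single-Blaschke-factor case, where $\J_n=\J_{n+1}$ is classical (it is a restatement of the fact that multiplication by one extra factor costs nothing once you already have $n$ of them and the data is smooth enough). Summing the local gains over $j$ and invoking the $f$-property of $\Al$ to go back from the product to $f$ itself gives $f\in\J_{n+1}$.

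For (v.1)$\Rightarrow$(i.1) (and (ii.1)) I would construct, for each $k$ with $1\le k\le n$, an explicit witness $f_k\in\J_k$ together with a "shadow" $g_k\in\Al$ with $|g_k|\le|f_k|$ on $\D$ but $g_k\notin\J_k$, and simultaneously a witness in $\J_k\setminus\J_{k+1}$. The hypothesis \eqref{eqn:crucon} gives a subsequence of zeros $z_{j_\nu}$ with $d_{j_\nu}/(1-|z_{j_\nu}|)\to\infty$, so these zeros are extremely well separated relative to their distance to the boundary; around each such $z_{j_\nu}$ there is a large "free" region in which $B$ looks like the single factor $b_{j_\nu}$ times something bounded away from $0$ and $\infty$. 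Using $\Al$-interpolation (here is where the hypothesis that $B$ is $\Al$-interpolating, and that the interpolation condition is $\al$-independent, enters decisively) I can prescribe $f_k$ and its derivatives up to order $n$ at the points $z_{j_\nu}$ — making $f_k$ vanish to the right order so that $f_k B^k\in\Al$ — while keeping $|f_k|$ comparable to a slowly varying profile; the shadow $g_k$ is obtained by interpolating the same moduli but with a different (e.g. sign-twisted or phase-rotated) set of derivative data, so that $g_k B^k$ fails the Zygmund-type difference estimate. A Shirokov-type oscillation argument at the scale $d_{j_\nu}$ shows $g_k B^k\notin\Al$. Carefully arranging the decay rates (so that $f_k B^k\in\Al$ but $f_k B^{k+1}\notin\Al$) also delivers properness of the $k$th inclusion in \eqref{eqn:chain}, hence (ii.1).

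The main obstacle, as I see it, is the (v.1)$\Rightarrow$(i.1) direction: one must turn the asymptotic separation \eqref{eqn:crucon} into a genuine local-to-global construction that (a) produces an honest $\Al$ function, which requires controlling the interpolation operator's norm uniformly over the sparse subsequence, and (b) certifies the \emph{failure} of smoothness of $g_k B^k$, which is where a quantitative lower bound on the oscillation of a Blaschke factor near its zero — of the type underlying Shirokov's original example — has to be produced and matched against the prescribed modulus. The separation condition is exactly what makes these two requirements compatible: it gives enough room around each chosen zero to both build the function and exhibit the bad oscillation without interference from the other zeros. Packaging this cleanly, rather than the estimates themselves, is likely the crux.
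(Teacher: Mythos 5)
Your skeleton of trivial implications is right, and your instinct to prove (iv.1)$\Rightarrow$(v.1) by contraposition is the paper's route too. But there are two genuine gaps.

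First, and most seriously, your plan for (v.1)$\Rightarrow$(i.1) will not work as described, and it misses the one observation that makes the whole theorem cheap. You propose to build, for each $k$, a pair $f_k\in\J_k$ and a ``shadow'' $g_k$ with $|g_k|\le|f_k|$ on all of $\D$ but $g_k\notin\J_k$, by interpolating the same moduli with phase-rotated derivative data and then running a Shirokov-type oscillation argument. Interpolation prescribes values and derivatives only at the points $z_j$; it gives you no control of $|g_k|$ against $|f_k|$ on the rest of $\D$, so the pointwise domination $|g_k|\le|f_k|$ cannot be arranged this way, and the oscillation lower bound you defer to is precisely the hard part. The paper avoids all of this: once one has a single witness $f\in\J_n\setminus\J_{n+1}$ (i.e.\ (iv.1)), set $g_k:=fB^{n-k}$ for $k=1,\dots,n$. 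Then $g_kB^k=fB^n\in\Al$ so $g_k\in\J_k$, while $Bg_k=fB^{n-k+1}\in\Al\setminus\J_k$ because $fB^{n+1}\notin\Al$; since $|Bg_k|\le|g_k|$ everywhere, $\J_k$ is nonideal, and the same functions show $g_k\in\J_k\setminus\J_{k+1}$, giving (ii.1). So the only construction needed is at level $n$: by Lemma~\ref{lem:admis} and $\Al$-interpolation one produces $g\in\Al$ with $g^{(s)}(z_j)=0$ for $s<n$ and $g^{(n)}(z_j)=d_j^{\al-n}$; writing $g=fB^n$ and using Lemma~\ref{lem:fbm} plus $|(B^n)^{(n)}(z_j)|\lesssim(1-|z_j|)^{-n}$, condition \eqref{eqn:crucon} forces $\sup_j|f(z_j)|/(1-|z_j|)^\al=\infty$, which violates (iv.A) of Theorem~A and certifies $fB^{n+1}\notin\Al$ --- no oscillation estimate is ever required.

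Second, your contrapositive argument for (iv.1)$\Rightarrow$(v.1) relies on a ``reduction to the single Blaschke factor case,'' which is not a proof and not the actual mechanism. The working argument is: for $f\in\J_n$ and $g=fB^n$, the Taylor inequality \eqref{eqn:taylor} at $w=z_j$ evaluated at the nearest other zero $z_l$ (where $g$ also vanishes to order $n$) yields $|g^{(n)}(z_j)|\le Cn!\,d_j^{\al-n}$, and the hypothesis $d_j\lesssim 1-|z_j|$ upgrades this to $|g^{(n)}(z_j)|\lesssim(1-|z_j|)^{\al-n}$; since $\Om(B,\eps)$ is covered by pseudohyperbolic disks about the $z_j$, the Taylor expansion then gives $|g(z)|\lesssim(1-|z|)^\al$ on $\Om(B,\eps)$, and Theorem~A delivers $gB\in\Al$, i.e.\ $\J_n=\J_{n+1}$. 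The interaction between neighboring zeros --- not a local model of a single factor --- is what drives the estimate.
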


\par To get an example of an $\Al$-interpolating Blaschke product $B=B_{\{z_j\}}$ that satisfies 
\eqref{eqn:crucon}, take $z_j=(1-a^j)\exp(ib^j)$, $j\in\N$, where $a$ and $b$ are any fixed numbers 
with $0<a<b<1$. On the other hand, the Blaschke product with zeros $z_j=1-a^j$, where $a\in(0,1)$, 
is $\Al$-interpolating and violates \eqref{eqn:crucon}. 

\par A restricted version of Theorem \ref{thm:pervaya} appeared as Theorem 1 in \cite{DScand}. 
However, none of the current conditions (i.1)--(iii.1) were discussed in that paper, nor was 
the notion of a (non)ideal ideal introduced. 

\par Now we turn to the case of an integral order of smoothness, an issue that was not touched upon 
in \cite{DScand} altogether. This time, the algebras to be dealt with are $\HN=\{f:\,f^{(N)}\in H^\infty\}$. 
The phenomenon we are interested in may only occur when $N\ge2$, so it will be convenient to write 
$N=n+1$ with $n\in\N$. Given $n$ and a Blaschke product $B$, we put 
$$I_k:=\{f\in\hnone:\,fB^k\in\hnone\},\qquad k\in\N.$$ 
Our intention is to study the $I_k$'s, as ideals/subspaces of $\hnone$, in the same spirit as 
the $\J_k$'s above. As before, we have 
\begin{equation}\label{eqn:chainint}
I_1\supset\dots\supset I_n\supset I_{n+1} 
\end{equation}
and 
\begin{equation}\label{eqn:tailint}
I_{n+1}=I_{n+2}=\dots. 
\end{equation}
Here, the inclusions \eqref{eqn:chainint} are due to the fact that $\hnone$ has the $f$-property 
(i.\,e., division by inner factors preserves membership in $\hnone$), as shown by Shirokov in 
\cite{Shi1}. The equalities \eqref{eqn:tailint} can likewise be deduced from Shirokov's results; see 
\cite{Shi1} or Lemma \ref{lem:shimult} below. 
\par The $\HN$ counterpart of Theorem \ref{thm:pervaya} reads as follows. 

\begin{thm}\label{thm:tseloe} Let $n\in\N$ and suppose $B$ is an $H^\infty_1$-interpolating Blaschke 
product with zeros $\{z_j\}$. The following are equivalent. 

\smallskip{\rm (i.2)} The ideals $I_1,\dots,I_n$ are all nonideal (as subspaces of $\hnone$). 

\smallskip{\rm (ii.2)} The inclusions $I_1\supset\dots\supset I_n\supset I_{n+1}$ are all proper. 

\smallskip{\rm (iii.2)} $I_n$ is nonideal. 

\smallskip{\rm (iv.2)} $I_{n+1}\ne I_n$. 

\smallskip{\rm (v.2)} The sequence $\{z_j\}$ satisfies \eqref{eqn:crucon}. 
\end{thm}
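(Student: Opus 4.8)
The plan is to mimic the strategy behind Theorem~\ref{thm:pervaya}, replacing the role of Theorem~A (the characterization of multipliers/divisors in $\Al$) with its $\hnone$-analogue. Since $n+1\ge 2$ is an integer exceeding $1$, the relevant substitute is the new factorization theorem for $\hnone$ announced in the abstract (Theorem~\ref{thm:factint} in the text), together with Shirokov's $f$-property for $\hnone$; these already give the chain \eqref{eqn:chainint} and the stabilization \eqref{eqn:tailint}, as noted. The logical skeleton is the cycle $(\text{ii.2})\Rightarrow(\text{iii.2})\Rightarrow(\text{iv.2})\Rightarrow(\text{v.2})\Rightarrow(\text{i.2})\Rightarrow(\text{ii.2})$, so only a handful of genuine implications need attention. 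The trivial ones are $(\text{ii.2})\Rightarrow(\text{iv.2})$ (an endpoint of a chain of proper inclusions witnesses $I_{n+1}\ne I_n$) and $(\text{i.2})\Rightarrow(\text{iii.2})$.

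First I would handle $(\text{iv.2})\Rightarrow(\text{v.2})$ by contraposition: assuming the uniform separation condition $\sup_j d_j/(1-|z_j|)<\infty$ fails to hold in the negated form, i.e.\ assuming $d_j\le C(1-|z_j|)$ for all $j$, I want $I_n=I_{n+1}$. Equivalently, by the $\hnone$-version of condition (iv.A) I must show that $fB^n\in\hnone$ forces $|f(z)|=O((1-|z|)^{n+1})$ on a sublevel set $\Omte$ of $B$; and the mechanism is that when the zeros are uniformly relatively close, the sublevel set of $B^n$ near $z_j$ already contains a disk on which $B^n$ vanishes to high order, and the growth restriction on $fB^n$ near $z_j$ (coming from membership in $\hnone$, a Taylor-remainder condition at $z_j$) is strong enough to cascade down to the required decay of $f$ itself. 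This is the quantitative heart of the matter and I expect it to be the main obstacle: one must carefully track how an $(n+1)$-st order smoothness condition at a cluster of zeros forces the vanishing order of $f$, and the bookkeeping of the Blaschke factor's local behavior on $\Omte$ is delicate. The $\Al$-case of this step in \cite{DScand} provides the template, but the passage from Lipschitz to $H^\infty_{n+1}$ requires the new factorization result, since (iv.A)-type conditions are what that theorem delivers in the integer case.

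For the converse direction $(\text{v.2})\Rightarrow(\text{i.2})$, I would, given a subsequence along which $d_{j}/(1-|z_{j}|)\to\infty$, construct for each $k\in\{1,\dots,n\}$ an explicit $f\in I_k$ together with a witness $g\in\hnone$ with $|g|\le|f|$ on $\D$ but $g\notin I_k$, thereby showing $I_k$ is nonideal. The construction should be local: near each zero $z_{j}$ in the chosen subsequence, put $f$ equal (up to interpolation, which is possible precisely because $B$ is $H^\infty_1$-interpolating, hence $\hnone$-interpolating for every $n$) to a suitably scaled bump vanishing to order exactly $k$ at $z_j$ and of size $\approx(1-|z_{j}|)^{k}$ on the relevant scale $d_j$; the large separation $d_j\gg 1-|z_j|$ is exactly what lets these local pieces be glued into a genuine $\hnone$-function while still allowing $fB^k\in\hnone$, whereas multiplying by one more factor of $B$ destroys this. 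For $g$ one takes $|f|$ itself realized by an $\hnone$-function (again using that sets of uniqueness/interpolation behave well), for which $gB^k\notin\hnone$ because $g$ does not vanish fast enough at the zeros. This also yields $(\text{i.2})\Rightarrow(\text{ii.2})$ by refining the construction: choosing the local vanishing orders to differ along disjoint subsequences produces functions in $I_k\setminus I_{k+1}$ for each $k$, and in $I_n\setminus I_{n+1}$, giving strictness of every inclusion in \eqref{eqn:chainint}. The remaining link $(\text{iii.2})\Rightarrow(\text{iv.2})$ follows because if $I_n=I_{n+1}$ then $I_n$ coincides with one of the stabilized ideals in \eqref{eqn:tailint}, which are ideal subspaces of $\hnone$ by the $\hnone$-analogue of Theorem~A's condition (iv.A)---so $I_n$ cannot be nonideal. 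Assembling these implications closes the cycle and proves the theorem.
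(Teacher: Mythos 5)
Your high-level strategy is the same as the paper's: Theorem \ref{thm:tseloe} is proved by transplanting the proof of Theorem \ref{thm:pervaya}, with condition (iv.3) of Theorem \ref{thm:factint} (for $N=n+1$) replacing condition (iv.A) of Theorem A, and you correctly identify that factorization theorem as the decisive new ingredient. But there is a genuine gap in your nonideality step. To show $I_k$ is nonideal you need $f\in I_k$ and $g\in\hnone\setminus I_k$ with $|g|\le|f|$ on $\D$, and you propose to take for $g$ ``$|f|$ itself realized by an $\hnone$-function.'' This cannot work: $|f|$ is not analytic, and any analytic $g$ with $|g|=|f|$ throughout $\D$ is a unimodular constant times $f$ (they share zeros, and $g/f$ has constant modulus $1$), hence lies in $I_k$ exactly when $f$ does; prescribing $g$ only at the points $z_j$ by interpolation gives no control of $|g|$ elsewhere in $\D$. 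The correct witness --- used in the paper and already in Shirokov's example quoted in the introduction --- is $g=Bf$: then $|Bf|\le|f|$ automatically, $Bf\in\hnone$ by the $f$-property (Lemma \ref{lem:shimult}), and $Bf\notin I_k$ precisely when $f\notin I_{k+1}$. This observation also makes your per-$k$ bump constructions unnecessary: a single $f\in I_n\setminus I_{n+1}$ yields, via $g_k:=fB^{n-k}$, both the properness of every inclusion in \eqref{eqn:chainint} and the nonideality of every $I_k$, so (iv.2) alone implies (i.2) and (ii.2).

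Two further points. First, your construction for (v.2)$\implies$(iv.2) (``suitably scaled bumps glued together'') is too imprecise to verify either $fB^n\in\hnone$ or $fB^{n+1}\notin\hnone$; the paper instead interpolates the jet $g^{(s)}(z_j)=0$ ($s<n$), $g^{(n)}(z_j)=d_j$ by an $\hnone$-function (Lemma \ref{lem:admis} with $\al=n+1$, $k=n$), divides by $B^n$ to get $f\in I_n$, and reads off $|f(z_j)|\asymp d_j(1-|z_j|)^n$ from Lemmas \ref{lem:fbm} and \ref{lem:blann}, which under \eqref{eqn:crucon} violates (iv.3) with $m=n+1$. Second, in the contrapositive of (iv.2)$\implies$(v.2) one must cover $\Om(B,\eps)$ by pseudohyperbolic neighborhoods of the zeros; since an $H^\infty_1$-interpolating sequence is only a union of \emph{two} separated sequences (Theorem C), this requires writing $B=B_1B_2$ with $B_1,B_2$ interpolating and using $\Om(B,\eps)\sb\Om(B_1,\sqrt\eps)\cup\Om(B_2,\sqrt\eps)$ --- a point your sketch omits. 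Finally, your announced cycle contains the link (ii.2)$\implies$(iii.2), which you never establish; the implications you actually sketch do close the equivalences, but only once your ``(i.2)$\implies$(ii.2)'' is read as ``(v.2)$\implies$(ii.2)''.
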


Of course, by saying that $B=B_{\{z_j\}}$ is an $H^\infty_1$-interpolating Blaschke product we mean that 
its zeros are simple and their closure $E:={\rm clos}\{z_j\}$ is an {\it $H^\infty_1$-interpolating set} in 
the natural sense. That is, every function $\ph:E\to\C$ satisfying \eqref{eqn:lipone} with $\be=1$ should be 
representable as $\ph=f|_E$ for some $f\in H^\infty_1$. 

\par While the proof of Theorem \ref{thm:pervaya} relies heavily on Theorem A, we could expect to prove 
Theorem \ref{thm:tseloe} by following the same pattern, should the appropriate factorization theorem (analogous 
to Theorem A) exist in the $\HN$ setting. Specifically, we would need to have some variant of condition 
(iv.A) at our disposal, playing a similar role. Unfortunately, no such thing seems to be readily available, and 
the next result is intended to fill that gap. In addition to $\HN$, also involved in the statement below is 
the space 
$$\bmoa_N:=\{f\in\bmoa:\,f^{(N)}\in\bmoa\},$$ 
which is mentioned for the sake of completeness. Here, as usual, $\bmoa$ stands 
for the analytic subspace of $\bmo=\bmo(\T)$, the class of functions of bounded mean 
oscillation on $\T$; see \cite[Chapter VI]{G}. 

\begin{thm}\label{thm:factint} Let $m$ and $N$ be positive integers with $m\ge N$. Given $f\in\HN$ 
and an inner function $\th$, the following are equivalent. 

\smallskip{\rm (i.3)} $f\th^m\in\HN$. 

\smallskip{\rm (ii.3)} $f\th^k\in\HN$ for all $k\in\N$. 

\smallskip{\rm (iii.3)} $f\th^k\in\bmoa_N$ for all $k\in\N$. 

\smallskip{\rm (iv.3)} For some $\eps\in(0,1)$, one has 
$$|f(z)|=O\left((1-|z|)^N\right),\qquad z\in\Omte.$$ 
\end{thm}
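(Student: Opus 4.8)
The implications (ii.3)$\Rightarrow$(i.3) and (ii.3)$\Rightarrow$(iii.3) being part trivial/part easy, and (iii.3)$\Rightarrow$(iv.3) following from the local estimate $|\th(z)|<\eps$ forcing $|f(z)|$ to be controlled on $\Omte$ via a BMOA-type reproducing argument, the substantive cycle is
$$\text{(i.3)}\ \Longrightarrow\ \text{(iv.3)}\ \Longrightarrow\ \text{(ii.3)}.$$
I would first reduce to derivative estimates: membership in $\HN$ is, by the Hardy--Littlewood-type description, equivalent to $|f^{(N)}(z)|=O(1)$ (bounded) on $\D$, and more generally one checks $f\in\HN$ iff $f^{(N-j)}(z)=O((1-|z|)^j)$ for $0\le j\le N$. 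So everything becomes a statement about the growth of derivatives. For (i.3)$\Rightarrow$(iv.3): assuming $(f\th^m)^{(N)}\in H^\infty$, expand by Leibniz, isolate the top term $f\th^{(N)}\cdot(\text{stuff})$... actually the cleanest route is to write $f\th^m=(f\th^{m-N})\cdot\th^N$ and exploit that on $\Omte$ one has good lower bounds for $|\th|$ derivatives are large, but let me instead follow the classical pattern: on $\Omte$, Schwarz--Pick gives $|\th'(z)|\asymp(1-|z|^2)^{-1}(1-|\th(z)|^2)$, so the derivatives of $\th^m$ are comparably large there, and the boundedness of $(f\th^m)^{(N)}$ together with Cauchy estimates on a Whitney ball around $z\in\Om(\th,\eps/2)$ forces $|f(z)|=O((1-|z|)^N)$. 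This is the heart of the matter and parallels the proof of (i.A)$\Rightarrow$(iv.A) in Theorem A.

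**Continuing.** For (iv.3)$\Rightarrow$(ii.3), fix $k\in\N$ and show $f\th^k\in\HN$, i.e. $(f\th^k)^{(N)}\in H^\infty$. Split $\D=\Om(\th,\eps)\cup\{|\th|\ge\eps\}$. On the region $\{|\th(z)|\ge\eps\}$, the function $1/\th$ is bounded there and, by Schwarz--Pick applied to $\th$, its derivatives satisfy $|(\,1/\th\,)^{(j)}(z)|=O((1-|z|)^{-j})$; combined with the a priori bound $f\in\HN$ (so $f^{(N-j)}=O((1-|z|)^j)$) and Leibniz, one gets $(f\th^k)^{(N)}=O(1)$ there. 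On $\Om(\th,\eps)$ one uses (iv.3): $|f(z)|=O((1-|z|)^N)$ there, and — this is the technical point — one needs the same-order decay for the lower derivatives $f^{(N-j)}$ on a slightly shrunken region $\Om(\th,\eps')$, which follows from the $|f|$-estimate by Cauchy's formula on Whitney balls (shrinking $\eps$ once, as in the "some $\Leftrightarrow$ each" remark after Theorem A). Then Leibniz again, using $\|\th\|_\infty=1$ and $|\th^{(j)}(z)|=O((1-|z|)^{-j})$, yields $(f\th^k)^{(N)}=O(1)$ on $\Om(\th,\eps')$. The two estimates patch up because the overlap region is handled by either bound, giving $f\th^k\in\HN$. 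The $\bmoa_N$ statement (iii.3) is then obtained by noting $\HN\subset\bmoa_N$ trivially for (ii.3)$\Rightarrow$(iii.3), and (iii.3)$\Rightarrow$(iv.3) by the same Whitney-ball Cauchy estimate, using that a $\bmoa$ function is locally $O(\log\frac1{1-|z|})$ — but actually one should be slightly careful and instead derive the pointwise bound $|f^{(N)}(z)|=O(\log\frac{1}{1-|z|})$ from $f^{(N)}\in\bmoa$, then integrate.

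**The main obstacle.** I expect the crux to be the passage from the single modulus estimate $|f(z)|=O((1-|z|)^N)$ on $\Omte$ to simultaneous control of all derivatives $f^{(N-j)}$ on a comparable region — and, relatedly, obtaining from Schwarz--Pick the bounds $|\th^{(j)}(z)|=O((1-|z|)^{-j})$ uniformly and $|(1/\th)^{(j)}(z)|=O((1-|z|)^{-j})$ on $\{|\th|\ge\eps\}$ with constants depending only on $j$ and $\eps$. The derivative estimates for $\th$ are standard (iterate Schwarz--Pick, or use that $\th$ maps $\D$ to $\D$); the estimates for $1/\th$ on $\{|\th|\ge\eps\}$ require a little more care because $1/\th$ is not globally bounded — one argues on each Whitney ball $B(z,c(1-|z|))$, shrinking $c$ so the ball stays in $\{|\th|\ge\eps/2\}$ by the oscillation bound for $\th$, and then applies Cauchy estimates to the bounded holomorphic function $1/\th$ there. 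Once these two lemmas are in place, the rest is Leibniz bookkeeping. The only genuinely new ingredient relative to Theorem A is that we work with the literal algebra $\HN$ rather than its Zygmund surrogate, but since all the estimates above are stated pointwise in terms of derivative growth, the proof goes through verbatim; the integral-smoothness case is, in this respect, no harder than the fractional one. I would also remark that the equivalence with the "for each $\eps$" version of (iv.3) holds here too, by the same shrinking argument.
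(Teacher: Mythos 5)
Your outline contains genuine gaps in both substantive implications, and the closing claim that the Theorem A argument ``goes through verbatim'' is precisely what fails here: the paper stresses that neither the duality proof nor the pseudoanalytic-extension proof of Theorem A survives in $\HN$, and supplies a genuinely new boundary argument. To begin with, your opening reduction is false: $f\in\HN$ is \emph{not} equivalent to $|f^{(N-j)}(z)|=O((1-|z|)^j)$ for $0\le j\le N$ (take $f\equiv1$). Such decay of the lower derivatives holds only near $\specth$ and only as a consequence of (iv.3); it is the content of the theorem, not an a priori fact. This error propagates into your (iv.3)$\implies$(ii.3) step: on the region $\{|\th|\ge\eps\}$ the Leibniz term $f\cdot(\th^k)^{(N)}$ is only $O((1-|z|)^{-N})$ from the trivial Cauchy bound, and $f$ has no compensating decay there. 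That region reaches every point of $\T\setminus\specth$, arbitrarily close to $\specth\cap\T$, where $|\th^{(l)}|$ genuinely blows up; so the decomposition of $\D$ into $\Omte$ and its complement is the wrong geometry. The paper instead works on $\T\setminus\specth$ and proves \emph{two} complementary estimates, $|f^{(N-l)}(\ze)|\lesssim d_\th(\ze)^l$ (Taylor expansion about the nearest spectral point, whose derivative data are controlled via Lemma \ref{lem:dyader}) and $|f^{(N-l)}(\ze)|\lesssim|\th'(\ze)|^{-l}$ (Lemma \ref{lem:dyashi}), hence $|f^{(N-l)}(\ze)|\lesssim\tau_\th(\ze)^l$ with $\tau_\th=\min(d_\th,1/|\th'|)$; this pairs exactly with Shirokov's bound $|\th^{(l)}(\ze)|\lesssim\tau_\th(\ze)^{-l}$ (Lemma \ref{lem:shider}). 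This $\tau_\th$ mechanism is the key new idea, and your plan has no substitute for it.

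The other direction is also broken. Your (i.3)$\implies$(iv.3) rests on $|\th'(z)|\asymp(1-|z|^2)^{-1}(1-|\th(z)|^2)$, but Schwarz--Pick gives only the upper bound: $\th'$ may vanish inside $\Omte$ (e.g.\ between two nearby zeros), so there is no lower bound on the derivatives of $\th^m$ there, and no Whitney-ball argument extracts the decay of $|f|$ from the boundedness of $(f\th^m)^{(N)}$ alone. Moreover, when $m=N$ one cannot invoke Theorem A, which requires $m>\al=N$. The paper circumvents both obstacles by first proving (i.3)$\implies$(ii.3) via Shirokov's multiplication theorem (Lemma \ref{lem:shimult}, applied to $I=\th^m$, whose zeros all have multiplicity at least $N$), and only then descending to (iv.3) through $\HN\sb\bmoa_N\sb A^N$ and Theorem A with exponent $N+1>N$. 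These two borrowed ingredients --- Shirokov's multiplication/division theorems and his derivative bound in terms of $\tau_\th$ --- are exactly what your proposal is missing.
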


This last result is somewhat more delicate than Theorem A and calls for a new method of proof. Indeed, neither 
duality arguments (as in \cite{DSpb, DAmer}) nor the pseudoanalytic extension approach (as in \cite{Dyn2}) 
that worked for $\Al$ carry over to $\HN$. Our proof will be accomplished by combining some of 
Shirokov's techniques from \cite{Shi1} with those developed by the author. Finally, let us remark that the 
equivalence between (ii.3) and (iii.3) reflects an amusing \lq\lq self-improving property" (i.\,e., an 
automatic increase in smoothness), a phenomenon discussed in greater generality in \cite{Djfa}. 

\par Going back to the $\Al$ setting, we wish to discuss yet another aspect of the problem, 
namely, the construction of an \lq\lq ideal hull" for a nonideal ideal. Suppose, under 
the hypotheses of Theorem \ref{thm:pervaya}, that \eqref{eqn:crucon} is fulfilled; 
assume also that $1\le k\le n$. We know that the corresponding ideals $\J_k$ are then 
properly contained in each other and nonideal. Thus, for $f\in\Al$, no kind of \lq\lq ideal" smallness 
condition on $|f|$ -- in particular, no reasonable size condition on $|f(z_j)|$ -- can possibly 
be necessary and sufficient in order that $f\in\J_k$. At the same time, one feels that 
the values $|f(z_j)|$ {\it must} become appropriately small, as $j\to\infty$, whenever $f\in\J_k$. 
This motivates our search for a {\it necessary} condition, ideal in nature and involving the decrease 
rate of $|f(z_j)|$, that should hold for each $f\in\J_k$. Moreover, we want the condition to be 
sensitive enough to distinguish between $\J_k$ and $\J_{k-1}$. 

\par We have been able to find such a necessary condition in the case where $n/2<k\le n$. This is 
provided by part (a) of the theorem below, while part (b) shows that the condition is optimal. The latter 
deals with the full range $1\le k\le n$, and we strongly believe that the former should also extend 
to all of these $k$'s. 

\begin{thm}\label{thm:vtoraya} Let $n\in\N$, $n<\al<n+1$, and suppose $B$ is an 
$\Al$-interpolating Blaschke product with zeros $\{z_j\}$. 

\smallskip{\rm (a)} If $k$ is an integer with $n/2<k\le n$, then every $f\in\J_k$ satisfies 
\begin{equation}\label{eqn:decr}
\left|f(z_j)\right|\le\const\cdot d_j^{\al-k}\left(1-|z_j|\right)^k,\qquad j\in\N. 
\end{equation}

\smallskip{\rm (b)} For each $k=1,\dots,n$, there is a function $f\in\J_k$ with 
\begin{equation}\label{eqn:comp}
\left|f(z_j)\right|\asymp d_j^{\al-k}\left(1-|z_j|\right)^k,\qquad j\in\N. 
\end{equation}
\end{thm}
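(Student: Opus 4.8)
The plan is to reduce both parts to one algebraic identity and then argue separately. Writing $g:=fB^k$, I first record that, since $B$ is $\Al$-interpolating (hence has simple zeros), $B^k$ vanishes to order exactly $k$ at each $z_l$, so $g$ vanishes to order $\ge k$ there, and a one-line Leibniz computation gives $g^{(k)}(z_j)=k!\,f(z_j)\bigl(B'(z_j)\bigr)^k$ for every $j$. Combined with the standard estimate $|B'(z_j)|\asymp(1-|z_j|)^{-1}$ for interpolating Blaschke products (the quantity $(1-|z_j|^2)|B'(z_j)|=\prod_{l\ne j}\left|\f{z_l-z_j}{1-\bar z_lz_j}\right|$ lying between the interpolation constant and $1$), this shows that \eqref{eqn:decr} is equivalent to the bound $|g^{(k)}(z_j)|\le\const\cdot d_j^{\al-k}$, and that to prove (b) it is enough to produce $g\in\Al$ that is divisible by $B^k$, vanishes to order exactly $k$ at every $z_l$, and satisfies $|g^{(k)}(z_j)|\asymp d_j^{\al-k}$.

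For part (a), I would localize to the segment joining $z_j$ to a nearest companion zero $z_l$ (the infimum defining $d_j$ is attained since the zero sequence has no accumulation point in $\D$; the segment lies in $\D$ by convexity) and set $\psi(t):=g\bigl(z_j+t(z_l-z_j)\bigr)$ for $t\in[0,1]$. Since $\psi^{(i)}(t)=(z_l-z_j)^i g^{(i)}\bigl(z_j+t(z_l-z_j)\bigr)$ and $g^{(n)}$ is Lipschitz of order $\al-n$ on $\D$ (as $g\in\Al$, $n<\al<n+1$), one gets $|\psi^{(n)}(t)-\psi^{(n)}(s)|\le\const\cdot d_j^\al|t-s|^{\al-n}$; moreover $\psi$ vanishes to order $\ge k$ at both endpoints. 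Taylor-expanding $\psi$ at $0$ and at $1$ and subtracting, the difference of the two Taylor polynomials — a polynomial of degree $\le n$ — is $O(d_j^\al)$ on $[0,1]$. The crux is then the elementary lemma: a polynomial $h$ of degree $\le n$ vanishing to order $\ge k$ at $0$ satisfies $|h^{(k)}(0)|\le C(n,k)\max_{0\le i<k}|h^{(i)}(1)|$. This holds precisely when $2k>n$ — which is exactly the hypothesis $n/2<k$ — since the only polynomial of degree $\le n$ vanishing to order $\ge k$ at two distinct points is $0$, so the $k$ functionals $h\mapsto h^{(i)}(1)$ ($0\le i<k$) have trivial common kernel on the $(n-k+1)$-dimensional space of admissible $h$, hence span its dual. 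Feeding this back to the Taylor polynomial of $\psi$ at $0$ yields $d_j^k|g^{(k)}(z_j)|=|\psi^{(k)}(0)|\le\const\cdot d_j^\al$, i.e. \eqref{eqn:decr}.

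For part (b), I would use that $E={\rm clos}\{z_j\}$ is $\Al$-interpolating to prescribe the jets of an $\Al$-function up to order $n$ on $E$, choosing $g^{(i)}(z_l):=0$ for $0\le i\le n$, $i\ne k$, and $g^{(k)}(z_l):=d_l^{\al-k}$, so the associated polynomials are $P_l(z)=\f{d_l^{\al-k}}{k!}(z-z_l)^k$. One then has to check that this data lies in the admissible class described in Section 2, i.e. that $|(P_l-P_m)^{(i)}(z_l)|\le\const\cdot|z_l-z_m|^{\al-i}$ for $0\le i\le n$; for $i>k$ the left side is $0$, and for $i\le k$ it reduces to $d_l^{\al-k},\,d_m^{\al-k}\le|z_l-z_m|^{\al-k}$, which holds because $z_l,z_m$ are distinct zeros of $B$, so $d_l\le|z_l-z_m|$ and $d_m\le|z_l-z_m|$, and $\al-k>0$. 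Thus an interpolating $g\in\Al$ exists; it vanishes to order exactly $k$ at every $z_l$, so $g/B^k$ is holomorphic on $\D$ with boundary values in $L^\infty$, hence lies in $H^\infty$, and the $f$-property of $\Al$ (\cite{H}) gives $f:=g/B^k\in\Al$, i.e. $f\in\J_k$. Finally the identity above yields $|f(z_j)|=d_j^{\al-k}/\bigl(k!\,|B'(z_j)|^k\bigr)\asymp d_j^{\al-k}(1-|z_j|)^k$, which is \eqref{eqn:comp}.

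The main obstacle in (a) is the linear-algebra lemma and, above all, recognizing that its validity threshold is exactly $k>n/2$: for $k\le n/2$ the argument genuinely breaks down (a nonzero degree-$n$ polynomial can vanish to order $n/2$ at two points), consistent with (a) being asserted only for $n/2<k\le n$; note also that, unlike the proof of Theorem \ref{thm:pervaya}, this argument bypasses Theorem A entirely. In (b) the delicate steps are the verification that the chosen jet data is admissible — which rests on the cheap but essential inequality $d_l\le|z_l-z_m|$ — and the legitimacy of dividing by $B^k$, handled by Havin's $f$-property.
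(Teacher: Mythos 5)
Your proof is correct. Part (b) is essentially the paper's argument verbatim: the jet data and its admissibility check (resting on $d_l\le|z_l-z_m|$ and $|d_l^{\al-k}-d_m^{\al-k}|\le\max(d_l,d_m)^{\al-k}$) are exactly Lemma \ref{lem:admis}, and the chain interpolation $\to$ divisibility by $B^k$ $\to$ Havin's $f$-property $\to$ Lemma \ref{lem:fbm} is identical; your closed formula $(B^k)^{(k)}(z_j)=k!\,\bigl(B'(z_j)\bigr)^k$ simply replaces the induction of Lemma \ref{lem:blann}. Part (a) follows the same skeleton --- Taylor-expand $g=fB^k$ between $z_j$ and a (near-)closest companion zero, exploit the order-$k$ vanishing at both ends, and invert a linear system for $g^{(k)}(z_j)$ --- but your handling of the linear algebra is genuinely different. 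The paper forms the square $(n-k+1)\times(n-k+1)$ system \eqref{eqn:system} with the explicit matrix $\M(k,n)$ of reciprocal factorials and appeals to Cramer's rule, relegating $\det\M\ne0$ to an explicit computation by induction on $n-k$; you instead use the $k$ functionals $h\mapsto h^{(i)}(1)$, $0\le i<k$, on the $(n-k+1)$-dimensional space of polynomials of degree $\le n$ vanishing to order $\ge k$ at $0$, and note that their common kernel is trivial because a nonzero polynomial of degree $\le n$ cannot carry $2k>n$ zeros. This makes the threshold $k>n/2$ conceptually transparent and dispenses with the determinant, at the price of being nonconstructive about the constant $C(n,k)$. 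Two harmless slips: the infimum defining $d_j$ need not be attained (the $z_l$ accumulate only on $\T$), so you should pick $z_l$ with $|z_j-z_l|\le 2d_j$, which changes nothing since $\al-k>0$; and ``holomorphic with bounded boundary values, hence in $H^\infty$'' is not a valid principle in general (consider $1/S$ with $S$ singular inner) --- the correct justification that $g/B^k\in H^\infty$ is the standard limiting argument with partial Blaschke products, which the paper also uses tacitly.
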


\par Here and throughout, the notation $U\asymp V$ means that the ratio $U/V$ lies between 
two positive constants. Those (hidden) constants in \eqref{eqn:comp} are of course independent 
of $j$, as is the constant in \eqref{eqn:decr}. 

\par If $n=1$, then the only possible value of $k$ is $1$ and the inequality $n/2<k$ is automatic. 
Now if $1\le n/2<k\le n$ and if \eqref{eqn:crucon} holds, then Theorem \ref{thm:vtoraya} tells us 
that the ideal ideal 
$$\widetilde\J_k:=\left\{f\in\Al:\,\left|f(z_j)\right|=
O\left(d_j^{\al-k}\left(1-|z_j|\right)^k\right),\,\,j\in\N\right\}$$ 
contains $\J_k$ but not $\J_{k-1}$. Indeed, \eqref{eqn:crucon} says that the quantity 
$d_j^{\al-k}\left(1-|z_j|\right)^k$ is essentially smaller than 
\begin{equation}\label{eqn:kminus1}
d_j^{\al-k+1}\left(1-|z_j|\right)^{k-1}, 
\end{equation}
whereas statement (b) of the theorem produces a function $f\in\J_{k-1}$ for which $|f(z_j)|$ has 
the same order of magnitude as \eqref{eqn:kminus1}. We have thus constructed an {\it ideal envelope}, 
namely $\widetilde\J_k$, of the nonideal ideal $\J_k$ without making it \lq\lq too much fatter". 
In fact, no smaller ideal ideal resulting from a stronger decrease condition on $|f(z_j)|$ would do. 

\par In conclusion, we point out a corollary of Theorem \ref{thm:vtoraya} that establishes a connection 
between $\J_k$, with $k$ as above, and the ideal 
$$\J_{-1}:=\{f\in\Al:f/B\in\La^\al\}.$$ 
This last result will also rely on the following characterization of $\J_{-1}$ that appears 
in \cite[Corollary 4.3]{DSpb}: for a function $f\in\Al$ ($0<\al<\infty$) and an interpolating 
Blaschke product $B=B_{\{z_j\}}$, 
\begin{equation}\label{eqn:iffo}
f\in\J_{-1}\iff\left|f(z_j)\right|=O\left((1-|z_j|)^\al\right). 
\end{equation}
(Here and below, \lq\lq interpolating" stands for \lq\lq$H^\infty$-interpolating", meaning that $\{z_j\}$ 
is an interpolating sequence for $H^\infty$; cf. \cite[Chapter VII]{G}. It is known that every 
$\Al$-interpolating Blaschke product is $H^\infty$-interpolating.) As a consequence of \eqref{eqn:iffo}, 
we see that $\J_{-1}$ is, under the current conditions, an ideal ideal of $\Al$. 

\begin{cor}\label{cor:minusone} Let $n\in\N$, $n<\al<n+1$, and suppose $B$ is an $\Al$-interpolating Blaschke 
product with zeros $\{z_j\}$. Given an integer $k$ with $n/2<k\le n$, one has $\J_k\sb\J_{-1}$ if and only if 
\begin{equation}\label{eqn:supisfin}
\sup_j\f{d_j}{1-|z_j|}<\infty. 
\end{equation}
\end{cor}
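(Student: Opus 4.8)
The plan is to deduce Corollary \ref{cor:minusone} from Theorem \ref{thm:vtoraya} together with the characterization \eqref{eqn:iffo} of $\J_{-1}$. Throughout, write $\de_j:=d_j/(1-|z_j|)$; note $\de_j\ge c>0$ since any interpolating sequence is separated, so the content of \eqref{eqn:supisfin} is that the $\de_j$ are bounded above, while its negation is exactly \eqref{eqn:crucon}.

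For the \emph{if} direction, assume \eqref{eqn:supisfin}, so $d_j\le C(1-|z_j|)$. Take any $f\in\J_k$. By part (a) of Theorem \ref{thm:vtoraya} (applicable since $n/2<k\le n$),
\begin{equation*}
|f(z_j)|\le\const\cdot d_j^{\al-k}(1-|z_j|)^k\le\const\cdot C^{\al-k}(1-|z_j|)^{\al-k}(1-|z_j|)^k=\const\cdot(1-|z_j|)^\al,
\end{equation*}
where we used $\al-k>0$. Thus $|f(z_j)|=O((1-|z_j|)^\al)$, and since $B$ is $\Al$-interpolating, hence $H^\infty$-interpolating, \eqref{eqn:iffo} yields $f\in\J_{-1}$. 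Therefore $\J_k\sb\J_{-1}$.

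For the \emph{only if} direction, argue by contraposition: suppose \eqref{eqn:supisfin} fails, i.e. \eqref{eqn:crucon} holds. By part (b) of Theorem \ref{thm:vtoraya} there is $f\in\J_k$ with $|f(z_j)|\asymp d_j^{\al-k}(1-|z_j|)^k$. If we had $\J_k\sb\J_{-1}$, then $f\in\J_{-1}$, so by \eqref{eqn:iffo} $|f(z_j)|=O((1-|z_j|)^\al)$; combining this with the lower bound from part (b) gives $d_j^{\al-k}(1-|z_j|)^k\le\const\cdot(1-|z_j|)^\al$, i.e. $d_j^{\al-k}\le\const\cdot(1-|z_j|)^{\al-k}$, i.e. $\de_j^{\al-k}\le\const$. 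Since $\al-k>0$ this bounds $\sup_j\de_j$, contradicting \eqref{eqn:crucon}. Hence $\J_k\not\sb\J_{-1}$, completing the proof.

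Since every step is a direct invocation of results stated in the excerpt, there is no serious obstacle; the only point requiring a word of care is the passage from $d_j^{\al-k}(1-|z_j|)^k\le\const(1-|z_j|)^\al$ to a bound on $\de_j$, where one must remember that $\al-k$ is strictly positive (guaranteed by $k\le n<\al$) so that raising to the power $1/(\al-k)$ is legitimate and order-preserving. One could alternatively state this direction without part (b), observing that if \eqref{eqn:supisfin} fails then the set $\widetilde\J_k$ from the discussion after the theorem strictly exceeds $\J_{-1}$ (as \eqref{eqn:iffo} identifies $\J_{-1}$ with the $\widetilde{}$-type ideal for exponent pair $(0,\al)$), but the argument via part (b) is the cleanest and is self-contained given the stated results.
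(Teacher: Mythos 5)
Your proposal is correct and follows essentially the same route as the paper: the \lq\lq if" part combines \eqref{eqn:supisfin} with the estimate \eqref{eqn:decr} from Theorem \ref{thm:vtoraya}(a) and then invokes \eqref{eqn:iffo}, while the \lq\lq only if" part uses the extremal function from part (b) together with \eqref{eqn:crucon} to violate \eqref{eqn:iffo}. The extra remarks (the lower bound $\de_j\ge c$ and the care about $\al-k>0$) are accurate but not needed beyond what the paper already does.
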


To prove the \lq\lq if" part, we combine \eqref{eqn:supisfin} with \eqref{eqn:decr} to get 
$$\left|f(z_j)\right|\le\const\cdot\left(1-|z_j|\right)^\al,\qquad j\in\N,$$ 
for every $f\in\J_k$. Then we invoke \eqref{eqn:iffo} to conclude that $\J_k\sb\J_{-1}$. 
Conversely, assuming \eqref{eqn:crucon} and taking an $f\in\J_k$ with property \eqref{eqn:comp}, 
we obtain 
$$\sup_j\f{|f(z_j)|}{(1-|z_j|)^\al}=\infty.$$ 
This specific $f$ is therefore not in $\J_{-1}$, by \eqref{eqn:iffo} again, and the \lq\lq only if" part 
follows. 

\par The remaining part of the paper contains some preliminary material on interpolating sets for 
$\Al$ and $\hnone$, a few lemmas, and finally the proofs of our main results. 

\section{Preliminaries on free interpolation in $\Al$ and $\HN$} 

Let $n$ be a nonnegative integer. Given $g\in\Al$, with $n<\al<n+1$, one has 
\begin{equation}\label{eqn:taylor}
\left|g^{(s)}(z)-\sum_{m=0}^{n-s}\f{g^{(s+m)}(w)}{m!}(z-w)^m\right|\le C|z-w|^{\al-s}
\quad(z,w\in\D,\,\,s=0,\dots,n),
\end{equation}
where $C=C_g$ is a constant independent of $z$ and $w$. If $g\in\hnone$, then \eqref{eqn:taylor} 
holds with $\al=n+1$. 

\par A closed set $E\sb{\rm clos}\,\D$ is said to be an {\it $\Al$-interpolating set} if every 
interpolation problem 
\begin{equation}\label{eqn:intprob}
g|_E=\ph_0,\quad g'|_E=\ph_1,\quad\dots,\quad g^{(n)}|_E=\ph_n
\end{equation}
has a solution $g\in\Al$, provided that the {\it data} $\ph_s:E\to\C$ satisfy 
\begin{equation}\label{eqn:taylorphi}
\left|\ph_s(z)-\sum_{m=0}^{n-s}\f{\ph_{s+m}(w)}{m!}(z-w)^m\right|\le C|z-w|^{\al-s}
\quad(z,w\in E,\,\,s=0,\dots,n)
\end{equation}
with some fixed $C>0$. 

\par Similarly, we call $E$ an {\it $\hnone$-interpolating set} if every interpolation problem 
\eqref{eqn:intprob} has a solution $g\in\hnone$ whenever the {\it data} $\ph_s:E\to\C$ ($0\le s\le n$) 
satisfy \eqref{eqn:taylorphi} for $\al=n+1$ and for some constant $C>0$. 

\par Of course, \eqref{eqn:taylorphi} just means that the $\ph_s$ obey the necessary conditions 
coming from \eqref{eqn:taylor}. The validity of \eqref{eqn:taylorphi}, with $n<\al\le n+1$, will be also 
expressed by saying that the $(n+1)$-tuple $(\ph_0,\dots,\ph_n)$ is an {\it $\al$-admissible jet} on $E$. 

\par The following characterization of $\Al$-interpolating sets was given by Dyn'kin in \cite{Dyn1}; 
see also \cite[Sect.\,3]{Dyn2}. We shall use the notation $\rho(\cdot,\cdot)$ for the pseudohyperbolic 
distance on $\D$, so that $\rho(z,w):=|z-w|/|1-\bar zw|$. 

\begin{thb} Let $\al>0$, $\al\notin\N$, and let $E$ be a closed subset of ${\rm clos}\,\D$. Then $E$ is 
an $\Al$-interpolating set if and only if it has the two properties below: 
\par{\rm (1.B)} The set $E\cap\D$ is separated, in the sense that 
$$\inf\{\rho(z,w):\,z,w\in E\cap\D,\,z\ne w\}>0.$$
\par{\rm (2.B)} There is a constant $c>0$ such that every arc $I\sb\T$ satisfies 
$$\sup_{\ze\in I}{\rm dist}\,(\ze,E)\ge c|I|,$$ 
where ${\rm dist}\,(\ze,E):=\inf_{z\in E}|\ze-z|$ and $|I|$ is the length of $I$. 
\end{thb}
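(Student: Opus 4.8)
The plan is to prove the two implications separately, as they are of quite different character. Sufficiency --- producing, from an arbitrary $\al$-admissible jet on $E$, a genuine interpolant in $\Al$ --- is the substantial half, and I would carry it out by Dyn'kin's \emph{pseudoanalytic extension} technique (cf. \cite{Dyn1, Dyn2}). Necessity is \lq\lq soft'': one first invokes the closed graph theorem to obtain a uniform interpolation constant $M$, and then, when (1.B) or (2.B) fails, one exhibits $\al$-admissible jets --- with a bounded admissibility constant --- that cannot be interpolated with any fixed $M$. Here and below $n:=[\al]$.

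For sufficiency, fix a jet $(\ph_0,\dots,\ph_n)$ on $E$ obeying \eqref{eqn:taylorphi}. First I would run a Whitney construction --- a Whitney decomposition of the complement of $E$, together with a subordinate partition of unity --- to build $\Phi\in\La^\al$, defined on a neighbourhood of $\ov{\D}$, with $\p^s\Phi|_E=\ph_s$ for $0\le s\le n$ and all antiholomorphic Taylor data on $E$ set equal to zero; the admissibility \eqref{eqn:taylorphi} is exactly what makes $\Phi$ \lq\lq flat'' along $E$, in the sense that $|\dbar\Phi(z)|\le C\,\dist(z,E)^{\al-1}$, with $\dbar\Phi$ smooth off $E$. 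The interpolant will then be $g:=\Phi-u$, where $u$ solves the inhomogeneous equation $\dbar u=\dbar\Phi$ on $\D$; for $g$ to belong to $\Al$ and to carry the prescribed jet on $E$, I need a solution $u$ that (i) lies in $\La^\al(\ov{\D})$ and (ii) has jet on $E$ that is itself admissible and small enough to be iterated away. The existence of such a well-behaved $\dbar$-solution operator on $\D$ is precisely where the two geometric hypotheses enter: by (1.B) the set $E\cap\D$ is pseudohyperbolically separated --- hence, together with (2.B), an interpolating sequence for $H^\infty$ --- while (2.B) forces $\dist(\cdot,E)$ to satisfy Carleson-type bounds in boxes abutting $\T$, and these two facts are what allow a density of size $\dist(\cdot,E)^{\al-1}$ to be \lq\lq holomorphically corrected'' on $\D$ without destroying $\La^\al$-smoothness up to the circle.

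For necessity, suppose first that $E\cap\D$ is not separated, and choose $z_k\in E\cap\D$ whose nearest neighbour $w_k$ in $E$ satisfies $\rho(z_k,w_k)\to0$; putting $r_k:=|z_k-w_k|=\dist(z_k,E\setminus\{z_k\})$, one finds $r_k=o(1-|z_k|)$, so the segment $[z_k,w_k]$ stays at distance comparable to $1-|z_k|$ from $\T$. The jet that vanishes identically on $E$ except that $\ph_n(z_k):=r_k^{\al-n}$ is $\al$-admissible with an absolute constant --- at most one Taylor term in \eqref{eqn:taylorphi} is ever nonzero, and the resulting inequality follows from $|z_k-p|\ge r_k$ for $p\in E\setminus\{z_k\}$. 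But an interpolant $g\in\Al$ would satisfy $g^{(n)}(z_k)=r_k^{\al-n}$, $g^{(n)}(w_k)=0$, and $|g^{(n+1)}|\le C(1-|z|)^{\al-n-1}$, whence
\[
r_k^{\al-n}=\bigl|g^{(n)}(z_k)-g^{(n)}(w_k)\bigr|\le C\,r_k\,(1-|z_k|)^{\al-n-1};
\]
as $\al-n-1<0$, this forces $\rho(z_k,w_k)\asymp r_k/(1-|z_k|)$ to remain bounded below, contradicting the uniform bound $M$ for large $k$. If instead (2.B) fails, fix arcs $I_k$, pairwise far apart, with $\sup_{\ze\in I_k}\dist(\ze,E)\le k^{-1}|I_k|$, choose roughly $k$ points of $E$ at consecutive spacing $\asymp k^{-1}|I_k|$ along $I_k$, and prescribe on them an admissible jet (with $\ph_s\equiv0$ for $s\ge1$) whose $\ph_0$-values force a would-be interpolant $g$ to make $z\,g(z)-1$ nearly vanish on that dense net; by the smoothness of $g$ this smallness propagates across all of $I_k$, and the Jensen / log-integrability estimate for bounded analytic functions then drives the interpolation constant to $\infty$ as $k\to\infty$.

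The step I expect to be the real obstacle is the one inside sufficiency: constructing a $\dbar$-solution operator on $\D$ that simultaneously keeps $\La^\al$-regularity \emph{up to the boundary circle} and annihilates (or nearly annihilates) the jet on $E$. This is exactly the juncture at which conditions (1.B) and (2.B) must be used, and it is the technical heart of the argument; everything else --- the Whitney extension, the closed-graph reduction in the necessity half, and the Neumann-series iteration removing the residual jet --- is comparatively routine once that operator is available.
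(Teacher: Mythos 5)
First, a point of orientation: the paper does not prove Theorem B at all --- it is quoted as Dyn'kin's theorem, with the proof residing in \cite{Dyn1} (see also \cite[Sect.\,3]{Dyn2} and Shirokov's generalization \cite{ShiMS}). So there is no in-paper argument to compare yours against; what can be said is that your sufficiency outline (Whitney extension of the admissible jet to a $\Phi$ with $|\dbar\Phi(z)|\le C\,\dist(z,E)^{\al-1}$, followed by a $\dbar$-correction and an iteration removing the residual jet) is indeed the route Dyn'kin takes, and your necessity argument for (1.B) is sound: the jet supported at a single $z_k$ with $\ph_n(z_k)=r_k^{\al-n}$ is admissible with constant $1$, and integrating $g^{(n+1)}$ along $[z_k,w_k]$ (which stays at depth $\asymp 1-|z_k|$) does yield $(r_k/(1-|z_k|))^{\al-n-1}\le C$, contradicting $\rho(z_k,w_k)\to0$. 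The closed-graph reduction to a uniform interpolation constant is standard and fine.

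However, as a proof the submission has two genuine gaps, one of which you name yourself. (1) The entire analytic content of sufficiency --- the existence of a $\dbar$-solution operator that preserves $\La^\al$-regularity up to $\T$ and whose output has a controllably small jet on $E$, and the identification of exactly how (1.B) and (2.B) enter those estimates --- is declared to be ``the real obstacle'' and then not carried out. That step is the theorem; without it nothing is proved. (2) The necessity of (2.B) is hand-waved: the jet $\ph_0(p)\approx 1/p$ with $\ph_s\equiv0$ for $s\ge1$ is \emph{not} $\al$-admissible with a bounded constant once $\al>1$ (the first-order Taylor term in \eqref{eqn:taylorphi} is missing), the quantitative propagation of smallness from a $\de$-net of interior points to the whole arc $I_k$ requires a harmonic-measure or two-constants estimate you do not state, and the final ``Jensen drives the constant to $\infty$'' is not an inequality. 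As it stands this half would not survive refereeing. Since the result is classical and the paper only cites it, the efficient fix is to do likewise; if you do want a self-contained proof, the sufficiency estimates in \cite{Dyn1} and the necessity construction there for the arc condition are the pieces you must actually supply.
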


Subsequently, Shirokov \cite{ShiMS} extended Theorem B to a larger scale of Lipschitz-type spaces involving 
general moduli of continuity. As a special case (namely, for the modulus of continuity $\om(t)=t$), his results 
provide a description of $\hnone$-interpolating sets, which can be stated as follows. 

\begin{thc} Suppose $n$ is a nonnegative integer and $E$ is a closed subset of ${\rm clos}\,\D$. Then $E$ is 
an $\hnone$-interpolating set if and only if it satisfies ${\rm (1.B)}_2$ and {\rm (2.B)}, where ${\rm (1.B)}_2$ 
is the condition that $E\cap\D$ be a union of two separated sets. 
\end{thc}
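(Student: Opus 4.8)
The plan is to recognize Theorem C as the $\om(t)=t$ instance of Shirokov's general free-interpolation theorem in \cite{ShiMS}, which extends Theorem B from $\Al$ to Lipschitz classes built on an arbitrary modulus of continuity; below I outline the direct proof in this special case, which follows the same two-step pattern (necessity, then sufficiency of the geometric conditions) as the proof of Theorem B. Throughout, the bigger space is $Y=\hnone$, and an admissible jet $(\ph_0,\dots,\ph_n)$ on $E$ is one obeying \eqref{eqn:taylorphi} with $\al=n+1$.

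\emph{Necessity.} The condition ${\rm (2.B)}$ is forced exactly as in the $\Al$ setting: restriction to $\T$ sends $\hnone$ into $C(\T)$ with a controlled modulus of continuity, so a failure of ${\rm (2.B)}$ --- arcs $I$ on which $\dist(\cdot,E)$ is everywhere much smaller than $|I|$ --- lets one prescribe, on a sequence of such arcs, admissible boundary data whose oscillation no $\hnone$-function can match; I would simply quote the corresponding step from the proof of Theorem B. The new ingredient is the necessity of ${\rm (1.B)}_2$, i.e., that the separatedness in Theorem B must now be relaxed to \lq\lq union of two separated sets\rq\rq\ but cannot be relaxed any further. Here one assumes $E\cap\D$ is \emph{not} a union of two separated sets --- equivalently, for every $\eps>0$ the $\eps$-proximity graph on $E\cap\D$ fails to be bipartite, which yields, along a sequence of scales accumulating at $\T$, tight clusters of points whose mutual pseudohyperbolic distances shrink to $0$ --- and builds from such clusters an $(n+1)$-admissible jet any interpolant of which would be forced to have unbounded $(n+1)$-st derivative, because the relevant local corrections accumulate a divergent (logarithmic) amount across the scales. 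This quantitative counterexample is the heart of the matter and, in my estimation, the main obstacle: the plain modulus $\om(t)=t$ only barely violates the regularity condition that makes a single separated part sufficient for $\Al$ with $\al\notin\N$, and one has to exploit precisely that logarithmic gap, passing from the combinatorial failure of ${\rm (1.B)}_2$ to a concrete configuration of points and certifying both the admissibility of the constructed data and the nonexistence of an $\hnone$-interpolant. I would reproduce Shirokov's construction from \cite{ShiMS} specialized to this case.

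\emph{Sufficiency.} Assume ${\rm (1.B)}_2$ and ${\rm (2.B)}$ and split $E\cap\D=\Sigma_1\cup\Sigma_2$ with each $\Sigma_i$ separated. First, by a Whitney-type extension --- whose compatibility hypothesis is exactly \eqref{eqn:taylorphi} with $\al=n+1$ --- I would produce $\Phi\in C^{n+1}(\ov\D)$ realizing the prescribed jet on $E$, arranged so that $\dbar\Phi$ vanishes to order $n+1$ on $E$ with the natural pointwise bounds ($|\dbar\Phi(z)|\lesssim\dist(z,E)$ near $E\cap\T$, higher-order vanishing near the interior points of $E$). Then I would correct $\Phi$ to an analytic function $g=\Phi-u$ with $\dbar u=\dbar\Phi$, where $u$ is given by an explicit integral operator, and verify that $g\in\hnone$. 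The geometric hypotheses enter precisely in this last estimate: ${\rm (2.B)}$ handles the boundary contribution exactly as in the $\Al$ argument, while the decomposition of $E\cap\D$ into two separated pieces is what allows the $\dbar$-correction to keep the $(n+1)$-st derivative bounded at the borderline smoothness $\om(t)=t$ --- equivalently, following Shirokov, one first interpolates the jet carried by $\Sigma_1\cup(E\cap\T)$ by some $g_1\in\hnone$, subtracts it, and then interpolates the corrected jet on the separated set $\Sigma_2$, so that at each step the underlying set is separated, which is the only regime in which the construction is available at integer smoothness. That two pieces both suffice and (by the necessity half) can be needed mirrors the single logarithmic gap; a modulus failing the regularity condition more severely would force a correspondingly larger, but still bounded, number of separated pieces.
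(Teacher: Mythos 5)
Theorem C is not proved in this paper: it is quoted from Shirokov \cite{ShiMS} as the special case $\om(t)=t$ of his free-interpolation theorem for Lipschitz-type classes with general moduli of continuity, which is precisely the identification you make, so your proposal matches the paper's treatment of the statement. Your accompanying sketch is consistent with how Shirokov's argument runs, but note that it is not self-contained --- in the two-step sufficiency scheme the second interpolant must carry a jet vanishing to order $n$ on $\Sigma_1\cup(E\cap\T)$ as well, not merely interpolate on the separated set $\Sigma_2$ --- and both genuinely hard steps (the quantitative obstruction when ${\rm (1.B)}_2$ fails and the borderline estimate at integer smoothness) are deferred to \cite{ShiMS}, exactly as the paper defers the entire statement.
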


A detailed discussion of the geometric condition (1.B)$\&$(2.B) can be found in \cite[Sect.\,5]{Dyn1}. 
In particular, it is shown there that if $E$ is an $\Al$-interpolating set, then $E\cap\D$ is an 
interpolating set for $H^\infty$, i.\,e., 
\begin{equation}\label{eqn:carl}
\inf_j\prod_{l:\,l\ne j}\rho(z_j,z_l)>0
\end{equation}
for any enumeration $\{z_j\}$ of $E\cap\D$. Thus, every $\Al$-interpolating Blaschke product is also 
$H^\infty$-interpolating (or just \lq interpolating', in standard terminology), as we mentioned before. 
Consequently, an $\hnone$-interpolating (or equivalently, $H^\infty_1$-interpolating) Blaschke product 
$B$ can always be written as $B=B_1B_2$, where the two factors are interpolating Blaschke products. 
\par Finally, let us remark that each $\Al$- or $\hnone$-interpolating set $E$ satisfies the 
{\it Beurling--Carleson condition} 
$$\int_\T\log{\rm dist}\,(\ze,E)\,|d\ze|>-\infty$$ 
and is, therefore, a non-uniqueness set for $\Al$ (see \cite{C}). 

\section{Some lemmas}

Given a sequence $\{z_j\}\sb\D$, recall the notation 
$$d_j:=\inf\{|z_j-z_l|:l\in\N\setminus\{j\}\}.$$ 

\begin{lem}\label{lem:admis} Let $0\le k\le n<\al\le n+1$, where $k$ and $n$ are integers. 
Assume also that $\{z_j\}\sb\D$ is a sequence satisfying 
$z_j\ne z_l$ for $j\ne l$ and having no accumulation points in $\D$. Finally, 
write $E:={\rm clos}\{z_j\}$ and define, for $s=0,\dots,n$, 
the functions $\ph_s$ on $E$ by putting 
$$\ph_s\equiv0\quad\text{for}\quad s\ne k,\quad
\ph_k(z_j)=d_j^{\al-k}\quad(j=1,2,\dots)\quad\text{and}\quad\ph_k|_{E\cap\T}=0.$$ 
Then $(\ph_0,\dots,\ph_n)$ is an $\al$-admissible jet on $E$. 
\end{lem}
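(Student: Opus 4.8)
The plan is to check directly the defining inequalities \eqref{eqn:taylorphi} of an $\al$-admissible jet for the prescribed data $(\ph_0,\dots,\ph_n)$. The decisive feature of these data is that $\ph_s\equiv0$ for every $s\ne k$, so in \eqref{eqn:taylorphi} at most one term of the Taylor-type sum can survive, and the whole problem reduces to two elementary estimates on $\ph_k$. Before turning to those, I would record the geometric observation that
$$|z-z_j|\ge d_j\qquad\text{for every }z\in E\text{ with }z\ne z_j;$$
equivalently, $\dist\bigl(z_j,E\setminus\{z_j\}\bigr)\ge d_j$. This is immediate when $z=z_l$ is another point of the sequence, and when $z\in E\cap\T$ one writes $z=\lim_m z_{l_m}$ with the $z_{l_m}$ pairwise distinct (hence $l_m\ne j$ for large $m$) and passes to the limit in $|z_{l_m}-z_j|\ge d_j$.

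With this in hand, I would split the verification of \eqref{eqn:taylorphi} into three cases according to the sign of $s-k$. If $s>k$, then $\ph_{s+m}\equiv0$ for all $m\ge0$, so $\ph_s(z)$ and the entire sum vanish and the inequality is trivial. If $s=k$, the sum collapses to its $m=0$ term $\ph_k(w)$, and \eqref{eqn:taylorphi} becomes a Lipschitz-type bound $|\ph_k(z)-\ph_k(w)|\le C|z-w|^{\al-k}$; since $t\mapsto t^{\al-k}$ is nondecreasing on $[0,\infty)$ (recall $\al-k>0$), for $z=z_j$ and $w=z_l$ one has $|d_j^{\al-k}-d_l^{\al-k}|\le\max(d_j,d_l)^{\al-k}\le|z_j-z_l|^{\al-k}$ by the observation above, while the cases in which $z$ or $w$ lies in $E\cap\T$ (where $\ph_k$ vanishes) are handled the same way. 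Finally, if $s<k$, every term of the sum vanishes except the one with $m=k-s$ (note $1\le k-s\le n-s$), so the left-hand side of \eqref{eqn:taylorphi} equals $|\ph_k(w)|\,|z-w|^{k-s}/(k-s)!$; this is $0$ when $w\in E\cap\T$, and when $w=z_j$ it equals $d_j^{\al-k}|z-z_j|^{k-s}/(k-s)!$, which, using $d_j\le|z-z_j|$ (the case $z=z_j$ being trivial), is at most $|z-z_j|^{\al-k}|z-z_j|^{k-s}=|z-z_j|^{\al-s}$. Combining the three cases shows that \eqref{eqn:taylorphi} holds with an absolute constant --- indeed with $C=1$ --- which is exactly the assertion.

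There is no real obstacle here: the only point is to notice that the single nonzero datum $\ph_k$ carries precisely the factor $d_j^{\al-k}$, and that this factor is absorbed into the right-hand side of \eqref{eqn:taylorphi} via the trivial bound $d_j\le\dist(z_j,E\setminus\{z_j\})$. The one spot that calls for even a word of argument is the passage to limit points of $\{z_j\}$ on $\T$ in the geometric observation.
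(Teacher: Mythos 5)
Your proof is correct and follows essentially the same route as the paper's: the same three-way case split on the sign of $s-k$, the same collapse of the Taylor sum to at most one term, and the same absorption of $d_j^{\al-k}$ via $d_j\le|z-z_j|$, yielding $C=1$. The only cosmetic difference is that the paper dispatches the boundary points of $E$ by appealing to continuity of the $\ph_s$ and checking \eqref{eqn:taylorphi} only at the $z_j$'s, whereas you verify them directly by a limiting argument.
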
 

\begin{proof} The functions $\ph_s$ being continuous on $E$, it suffices to check that 
\begin{equation}\label{eqn:tayphijl}
\left|\ph_s(z_l)-\sum_{m=0}^{n-s}\f{\ph_{s+m}(z_j)}{m!}(z_l-z_j)^m\right|\le C|z_l-z_j|^{\al-s}
\qquad (j,l\in\N,\,\,j\ne l)
\end{equation}
for $s=0,\dots,n$ and for some fixed $C>0$. (This is precisely \eqref{eqn:taylorphi} with $z=z_l$ 
and $w=z_j$.) We let LHS stand for the left-hand side of \eqref{eqn:tayphijl}, and we now estimate 
it by considering three cases as follows. 
\par If $0\le s<k$, then 
$$\text{\rm LHS}=\left|\f{\ph_k(z_j)}{(k-s)!}(z_l-z_j)^{k-s}\right|=\f1{(k-s)!}d_j^{\al-k}|z_l-z_j|^{k-s}
\le|z_l-z_j|^{\al-s},$$
where the final inequality is due to the obvious facts that $(k-s)!\ge1$ and $d_j\le|z_l-z_j|$. 
\par If $s=k$, then 
$$\text{\rm LHS}=\left|\ph_k(z_l)-\ph_k(z_j)\right|=\left|d_l^{\al-k}-d_j^{\al-k}\right|
\le|z_l-z_j|^{\al-k}=|z_l-z_j|^{\al-s},$$
since one clearly has $\max(d_l,d_j)\le|z_l-z_j|$. 
\par Finally, if $k<s\le n$, then $\text{\rm LHS}=0$. Thus, in all cases \eqref{eqn:tayphijl} holds 
with $C=1$. 
\end{proof}

\begin{lem}\label{lem:fbm} Suppose $f$ is an analytic function on $\D$, $B$ is a Blaschke 
product with zeros $\{z_j\}$, and $m$ is a nonnegative integer. Then 
$$(fB^m)^{(m)}(z_j)=f(z_j)\cdot(B^m)^{(m)}(z_j),\qquad j\in\N.$$ 
\end{lem}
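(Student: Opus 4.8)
The plan is to compute the $m$th derivative of $fB^m$ using the Leibniz rule and observe that the zero $z_j$ of $B$ has multiplicity one, so $B^m$ vanishes to order exactly $m$ at $z_j$. Concretely, write
$$
(fB^m)^{(m)}=\sum_{i=0}^m\binom{m}{i}f^{(i)}\,(B^m)^{(m-i)}.
$$
Evaluating at $z_j$, every term with $i<m$ contains a factor $(B^m)^{(m-i)}(z_j)$ with $m-i<m$, and since $B$ has a simple zero at $z_j$, the function $B^m$ vanishes to order $m$ there, so all derivatives of $B^m$ of order strictly less than $m$ vanish at $z_j$. Hence only the $i=m$ term survives, giving $(fB^m)^{(m)}(z_j)=f(z_j)(B^m)^{(m)}(z_j)$, as claimed.

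The only point needing a word of justification is the assertion that $(B^m)^{(r)}(z_j)=0$ for $0\le r<m$. This follows because $z_j$ is a simple zero of $B$ (the zeros of a Blaschke product $B_{\{z_j\}}$ are, by definition, located at the points $z_j$, and they are simple unless a point is repeated in the sequence; in the situation of the lemma one may as well note that near $z_j$ one can factor $B(z)=(z-z_j)h(z)$ with $h$ analytic and nonvanishing at $z_j$, whence $B^m(z)=(z-z_j)^m h(z)^m$, a function vanishing to order exactly $m$ at $z_j$). A function vanishing to order $m$ at a point has all derivatives up to order $m-1$ equal to zero there, and its $m$th derivative equals $m!$ times the leading Taylor coefficient.

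I would carry out the steps in this order: (1) reduce to a neighborhood of a fixed $z_j$ and record the local factorization $B^m=(z-z_j)^m g$ with $g$ analytic, $g(z_j)\ne0$; (2) differentiate $fB^m$ exactly $m$ times by Leibniz; (3) evaluate at $z_j$ and discard the vanishing terms; (4) conclude. No genuine obstacle is expected here — the statement is essentially a bookkeeping consequence of the simplicity of the zeros. The mild subtlety, if any, is simply to be careful that the zeros $z_j$ are assumed simple (as they will be in all applications of this lemma, where $B$ is an $\Al$- or $\HN$-interpolating Blaschke product), so that the multiplicity of the zero of $B^m$ at $z_j$ is exactly $m$ and not larger.
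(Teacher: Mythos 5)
Your proof is correct and is essentially identical to the paper's: apply the Leibniz rule to $(fB^m)^{(m)}$ and note that $(B^m)^{(l)}(z_j)=0$ for $0\le l\le m-1$. The only remark worth making is that your worry about simplicity of the zeros is unnecessary for the lemma itself: if $z_j$ were a multiple zero of $B$, then $B^m$ would vanish to order greater than $m$, the lower-order terms would still drop out, and the identity would hold trivially as $0=f(z_j)\cdot 0$.
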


\begin{proof} Indeed, 
\begin{equation*}
\begin{aligned}
(fB^m)^{(m)}(z_j)
&=\sum_{l=0}^m{m\choose l}f^{(m-l)}(z_j)(B^m)^{(l)}(z_j)\\
&=f(z_j)\cdot(B^m)^{(m)}(z_j),
\end{aligned}
\end{equation*} 
since $(B^m)^{(l)}(z_j)=0$ for $0\le l\le m-1$. 
\end{proof}

\begin{lem}\label{lem:blann} Suppose $B$ is an interpolating Blaschke product with zeros 
$\{z_j\}$, and let $\de=\de(B)$ be the value of the infimum in \eqref{eqn:carl}. Then, 
for $m\in\N$, 
$$|(B^m)^{(m)}(z_j)|\asymp(1-|z_j|)^{-m},\qquad j\in\N,$$ 
where the constants involved depend only on $m$ and $\de$. 
\end{lem}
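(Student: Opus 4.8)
The plan is to exploit the classical description of $(B^m)^{(m)}(z_j)$ in terms of the Blaschke factor vanishing at $z_j$ and the remaining product. Write $B=b_j B_j$, where $b_j(z):=\frac{\bar z_j}{|z_j|}\cdot\frac{z_j-z}{1-\bar z_j z}$ is the single Blaschke factor with zero at $z_j$, and $B_j:=B/b_j$ is the Blaschke product formed by the remaining zeros $\{z_l\}_{l\ne j}$. Since $z_j$ is a simple zero of $B$, it is a zero of order exactly $m$ of $B^m$, and $(B^m)^{(m)}(z_j)=m!\,\bigl(b_j'(z_j)\bigr)^m\,\bigl(B_j(z_j)\bigr)^m$; this follows by expanding $B^m=b_j^m B_j^m$ with the Leibniz rule, noting that every term with a derivative of $b_j^m$ of order $<m$ vanishes at $z_j$, so that only the term $(b_j^m)^{(m)}(z_j)\,B_j(z_j)^m$ survives, and $(b_j^m)^{(m)}(z_j)=m!\,(b_j'(z_j))^m$ because $b_j$ has a simple zero there.

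Next I would estimate the two factors separately. For the single factor, a direct computation gives $|b_j'(z_j)|=\dfrac{1}{1-|z_j|^2}\asymp(1-|z_j|)^{-1}$, with absolute constants; hence $|b_j'(z_j)|^m\asymp(1-|z_j|)^{-m}$. For the tail product, the interpolation condition \eqref{eqn:carl} says precisely that $\prod_{l\ne j}\rho(z_j,z_l)=|B_j(z_j)|\ge\de$ for every $j$, while trivially $|B_j(z_j)|\le1$; therefore $|B_j(z_j)|^m\asymp1$, with constants depending only on $\de$ (and, for the lower bound, also on $m$, through $\de^m$). Multiplying the two estimates and the factor $m!$ yields
$$|(B^m)^{(m)}(z_j)|=m!\,|b_j'(z_j)|^m\,|B_j(z_j)|^m\asymp(1-|z_j|)^{-m},$$
with constants depending only on $m$ and $\de$, as claimed.

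I do not anticipate a serious obstacle here; the only point requiring a little care is the bookkeeping in the Leibniz expansion of $b_j^m B_j^m$ — one must be sure that no lower-order contribution survives at $z_j$, which is immediate once one observes that $b_j^m$ vanishes to order exactly $m$ there and $B_j$ is analytic and nonzero at $z_j$. The separation hypothesis enters only through the uniform lower bound $\de$ on $|B_j(z_j)|$; without it the tail product could be arbitrarily small and the asymptotic lower bound would fail, so it is genuinely used. Everything else is a routine computation with the explicit form of a Blaschke factor.
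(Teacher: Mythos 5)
Your proof is correct, and it takes a somewhat different route from the paper's. You derive the exact closed formula $(B^m)^{(m)}(z_j)=m!\,\bigl(b_j'(z_j)\bigr)^m\bigl(B_j(z_j)\bigr)^m$ from the factorization $B=b_jB_j$ and a Leibniz expansion, which yields the upper and lower bounds simultaneously; the bookkeeping is right (only the top-order derivative of $b_j^m$ survives at $z_j$, since $b_j^m$ vanishes there to order exactly $m$), and the constants $m!\,\de^m$ and $m!$ visibly depend only on $m$ and $\de$. The paper instead gets the upper bound for free from $\|B^m\|_\infty\le1$ (a Cauchy--Schwarz--Pick type estimate) and proves the lower bound \eqref{eqn:lower} by induction on $m$, using Lemma \ref{lem:fbm} to reduce $(B^{m+1})^{(m+1)}(z_j)$ to $(m+1)(B^m)^{(m)}(z_j)B'(z_j)$ and then invoking the $m=1$ case, i.e.\ the ``well-known'' fact $|B'(z_j)|\ge\de(1-|z_j|^2)^{-1}$ --- which is exactly your factorization specialized to $m=1$. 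So your argument essentially unpacks that well-known fact and propagates it to all $m$ in one explicit computation rather than by induction; it is arguably more self-contained and gives an exact identity, while the paper's version leans on Lemma \ref{lem:fbm}, which it needs elsewhere anyway. Both uses of the hypothesis are the same: the Carleson condition \eqref{eqn:carl} enters only through the uniform bound $|B_j(z_j)|\ge\de$.
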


\begin{proof} The inequality 
$$|(B^m)^{(m)}(z_j)|\le C_m(1-|z_j|)^{-m}$$
is clearly true because $B^m$ is an $H^\infty$-function of norm 1. 
\par To prove the reverse inequality 
\begin{equation}\label{eqn:lower}
|(B^m)^{(m)}(z_j)|\ge c_{m,\de}(1-|z_j|)^{-m}, 
\end{equation}
we proceed by induction. When $m=1$, \eqref{eqn:lower} is a well-known restatement of 
the fact that $\{z_j\}$ is an interpolating sequence. Next, assuming that \eqref{eqn:lower} is 
established for some value of $m$, we note that 
$$(B^{m+1})^{(m+1)}(z_j)=(m+1)\cdot(B^mB')^{(m)}(z_j)=(m+1)\cdot(B^m)^{(m)}(z_j)\cdot B'(z_j);$$ 
to check the last step, apply Lemma \ref{lem:fbm} with $f=B'$. Thus, 
$$\left|(B^{m+1})^{(m+1)}(z_j)\right|=(m+1)|(B^m)^{(m)}(z_j)||B'(z_j)|,$$ 
and the desired estimate 
$$|(B^{m+1})^{(m+1)}(z_j)|\ge\const\cdot(1-|z_j|)^{-m-1}$$ 
now follows from the induction hypothesis \eqref{eqn:lower}, combined with the $m=1$ case. 
\end{proof}

The next two lemmas are borrowed from \cite{DSpb}; see Lemma 2.2 and Theorem 2.4 of that paper. 

\begin{lem}\label{lem:dyader} Suppose that $f$ is analytic on $\D$, $\th$ is an inner 
function, $0<\eps<1$, $\al>0$, and $k\in\N$. If 
$$|f(z)|=O\left((1-|z|)^\al\right),\qquad z\in\Omte,$$ 
then 
$$|f^{(k)}(z)|=O\left((1-|z|)^{\al-k}\right),\qquad z\in\Om(\th,\,\eps/2).$$ 
\end{lem}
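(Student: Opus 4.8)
The statement to prove is Lemma \ref{lem:dyader}: if $f$ is analytic on $\D$ and $|f(z)|=O((1-|z|)^\al)$ on $\Omte=\{z:|\th(z)|<\eps\}$, then $|f^{(k)}(z)|=O((1-|z|)^{\al-k})$ on $\Om(\th,\eps/2)$. The natural approach is a Cauchy-estimate argument on a well-chosen disk: at a point $z_0\in\Om(\th,\eps/2)$, integrate $f$ over a circle of radius comparable to $1-|z_0|$ centered at $z_0$, apply the Cauchy formula for $f^{(k)}$, and use the growth bound on $f$. For this to work I need the circle to stay inside $\Omte$, so the key geometric fact is that $\Om(\th,\eps/2)$ is \emph{comfortably} inside $\Omte$: a Harnack/Schwarz--Pick type estimate shows that $|\th|<\eps/2$ at $z_0$ forces $|\th|<\eps$ on the whole pseudohyperbolic ball $\{z:\rho(z,z_0)<c\}$ for a suitable constant $c=c(\eps)\in(0,1)$.

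First I would make precise the inclusion of a pseudohyperbolic ball. Since $\th$ is a self-map of $\D$ with $|\th(z_0)|<\eps/2$, the Schwarz--Pick lemma gives $\rho(\th(z),\th(z_0))\le\rho(z,z_0)$, and hence $|\th(z)|\le\f{|\th(z_0)|+\rho(z,z_0)}{1+|\th(z_0)|\rho(z,z_0)}$. One checks that this is $<\eps$ as soon as $\rho(z,z_0)$ is smaller than some fixed $c=c(\eps)$; for instance any $c\le\eps/2$ works since then the bound is $\le(\eps/2+\eps/2)/1=\eps$ — and one can even get strict inequality with $c$ slightly smaller. So the Euclidean disk $D(z_0,r)$ with $r:=\tfrac{c}{2}(1-|z_0|)$ lies inside the pseudohyperbolic ball of radius $c$ about $z_0$ (using the standard comparison $\rho(z,z_0)\le \f{|z-z_0|}{1-|z_0|}\cdot\f{1}{1-\rho}$ type estimate, or more simply that $D(z_0,r)$ with $r=\tfrac c2(1-|z_0|)$ is contained in $\{\rho(\cdot,z_0)<c\}$ for $c$ small), and therefore $D(z_0,r)\subset\Omte$.

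Second, I would run the Cauchy estimate. On $\partial D(z_0,r)$ every point $z$ satisfies $1-|z|\ge 1-|z_0|-r\ge(1-\tfrac c2)(1-|z_0|)\asymp 1-|z_0|$, so the hypothesis gives $|f(z)|\le C\,(1-|z|)^\al\le C'\,(1-|z_0|)^\al$ there. The Cauchy integral formula
$$f^{(k)}(z_0)=\f{k!}{2\pi i}\int_{\partial D(z_0,r)}\f{f(z)}{(z-z_0)^{k+1}}\,dz$$
then yields $|f^{(k)}(z_0)|\le k!\,\sup_{\partial D(z_0,r)}|f|\cdot r^{-k}\le \const\cdot(1-|z_0|)^{\al}\cdot(1-|z_0|)^{-k}=\const\cdot(1-|z_0|)^{\al-k}$, with the constant depending only on $k$, $\al$, $\eps$ and the implied constant in the hypothesis. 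Since $z_0\in\Om(\th,\eps/2)$ was arbitrary, this is exactly the claimed bound.

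The only real obstacle is the first step — pinning down the constant $c=c(\eps)$ so that $|\th|<\eps/2$ at a point propagates to $|\th|<\eps$ on a fixed-size pseudohyperbolic neighborhood, and then converting that into a Euclidean disk of radius $\asymp(1-|z_0|)$ lying in $\Omte$. This is routine once one invokes Schwarz--Pick, but it is where all the care goes; everything after it is a textbook Cauchy estimate. (One could alternatively phrase the whole thing via the subharmonicity of $\log|\th|$ and Harnack's inequality, but Schwarz--Pick is the cleanest route.)
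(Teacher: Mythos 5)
Your argument is correct: the Schwarz--Pick step does give a constant $c=c(\eps)$ such that $|\th|<\eps$ on the pseudohyperbolic ball of radius $c$ about any $z_0\in\Om(\th,\eps/2)$, the Euclidean disk of radius $\tfrac{c}{2}(1-|z_0|)$ sits inside that ball (since $|1-\bar z_0z|\ge 1-|z_0|$), and the Cauchy estimate then closes the argument. Note that the paper does not prove this lemma at all --- it is quoted from \cite{DSpb} (Lemma 2.2 there) --- so there is no in-text proof to compare with; your hyperbolic-neighborhood-plus-Cauchy-estimate argument is the standard one and is exactly what one would expect the cited source to contain.
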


\begin{lem}\label{lem:dyashi} Suppose that $f$ is a non-null function in $\HN$, with $N\in\N$, and $\th$ is 
an inner function. The following are equivalent. 

\smallskip{\rm (i)} For some $\eps\in(0,1)$, one has 
$$|f(z)|=O\left((1-|z|)^N\right),\qquad z\in\Omte.$$ 

\smallskip{\rm (ii)} The set $\specth\cap\T$ has Lebesgue measure $0$, and 
$$|f(\ze)|=O\left((1/|\th'(\ze)|^N\right),\qquad\ze\in\T\setminus\specth.$$ 
\end{lem}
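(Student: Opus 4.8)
\emph{Strategy.} The plan is to match both~(i) and~(ii) against one geometric picture describing how the sublevel set $\Omte$ sits against $\T\setminus\specth$, with everything read off from the size of $|\th'|$ there. I would first record two facts. \emph{(a)}~For each $\ze\in\T\setminus\specth$ and each $\eps\in(0,1)$ there is $w\in\Omte$ with $|\ze-w|\le C_\eps|\th'(\ze)|^{-1}$; conversely, if $z\in\Om(\th,\eps)$ and $\ze_0:=z/|z|\notin\specth$, then $|\th'(\ze_0)|\ge c_\eps(1-|z|)^{-1}$. \emph{(b)}~${\rm clos}\,\Omte\cap\T=\specth\cap\T$. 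For the first half of~(a) I would take $w=(1-t)\ze$ with $t\asymp|\th'(\ze)|^{-1}$ and bound $\log(1/|\th(w)|)$ below by $\gtrsim t|\th'(\ze)|$ (which is $\asymp1$ and can be made as large as one wishes by enlarging the constant in $t$), using the comparisons $|1-\bar z_jw|\asymp|\ze-z_j|$ and $|\eta-w|\asymp|\eta-\ze|$ valid while $t\le\dist(\ze,\specth)$, and falling back on the accumulation of $\Omte$ at the nearest point of $\specth$ when $t>\dist(\ze,\specth)$; the second half is the parallel lower estimate for $z=(1-t)\ze_0$. Fact~(b) is standard: $\Omte$ keeps off $\T\setminus\specth$, where $\th$ extends analytically with $|\th|\equiv1$, while $\liminf_{z\to\ze}|\th(z)|=0$ at every $\ze\in\specth$.

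\emph{The implication (i)$\Rightarrow$(ii)} is then short. As $f\in\HN\subset C({\rm clos}\,\D)$ and, by~(i), $|f(z)|\le C(1-|z|)^N\to0$ as $z\to\T$ within $\Omte$, the boundary function $f|_\T$ vanishes on ${\rm clos}\,\Omte\cap\T=\specth\cap\T$; since $f\not\equiv0$ gives $\log|f|\in L^1(\T)$, this set must be Lebesgue-null. For the decay, fix $\ze\in\T\setminus\specth$, use~(a) (with $\eps/2$) to choose $w\in\Om(\th,\eps/2)$ with $1-|w|\le|\ze-w|\le C|\th'(\ze)|^{-1}$, note by Lemma~\ref{lem:dyader} (applied with exponent $N$ to~(i)) that $|f^{(m)}(w)|=O((1-|w|)^{N-m})$ for $1\le m\le N$ while $|f(w)|=O((1-|w|)^N)$ by~(i), and expand $f$ about $w$ through~\eqref{eqn:taylor}: $|f(\ze)|\le C|\ze-w|^N+\sum_{m<N}(m!)^{-1}|f^{(m)}(w)|\,|\ze-w|^m\le C|\ze-w|^N\le C|\th'(\ze)|^{-N}$, which is~(ii).

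\emph{For (ii)$\Rightarrow$(i)}, because $\specth\cap\T$ is closed and Lebesgue-null, the set $\{z\in\D:z/|z|\notin\specth\}$ is dense, so by continuity of $f$ and openness of $\Om(\th,\eps)$ it suffices to bound $|f(z)|$ for $z\in\Om(\th,\eps)$ with $\ze_0:=z/|z|\notin\specth$ and $1-|z|$ small. Part~(a) yields $|\th'(\ze_0)|\ge c(1-|z|)^{-1}$; so, writing $f(z)=\sum_{j<N}(j!)^{-1}f^{(j)}(\ze_0)(z-\ze_0)^j+O(|z-\ze_0|^N)$ and using $|z-\ze_0|=1-|z|$, everything reduces to the companion bounds $|f^{(j)}(\ze_0)|=O(|\th'(\ze_0)|^{-(N-j)})$, $0\le j\le N$ --- a \emph{self-improvement} of~(ii). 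These I would get by taking an arc $I\ni\ze_0$ with $|I|\asymp|\th'(\ze_0)|^{-1}$ on which $|\th'|\asymp|\th'(\ze_0)|$, noting that on such an $I$ one has $|f|\le C|\th'|^{-N}\le C|\th'(\ze_0)|^{-N}$ while $f^{(N)}$ is bounded, and invoking a Landau--Kolmogorov interpolation inequality on $I$.

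\emph{Main obstacle.} The delicate part is the dictionary~(a) together with this last step, and the real difficulty in both is a degenerate configuration: $|\th'(\ze_0)|$ can be inflated by a single zero of $\th$ (or a thin clump of singular mass) lying much closer to $\ze_0$ than $|\th'(\ze_0)|^{-1}$, and then comparing the series for $|\th'|$ across $I$ no longer yields a uniform lower bound on an arc of the right length, nor is the radial point $(1-t)\ze_0$ with $t\asymp|\th'(\ze_0)|^{-1}$ sure to land in $\Omte$. My remedy would be to split off the offending factor, $\th=\th_{\mathrm{near}}\th_{\mathrm{far}}$, exploit the identity $|\th'|=|\th_{\mathrm{near}}'|+|\th_{\mathrm{far}}'|$ on $\T\setminus\specth$ --- so that~(ii) survives with $\th_{\mathrm{far}}$ for $\th$, and $z$ still belongs to $\Om(\th_{\mathrm{far}},\eps')$ --- and iterate over dyadic scales; carrying this iteration through cleanly is the hard point. (The original argument of \cite{DSpb} instead proceeds by duality and so avoids the pointwise analysis, characterizing $f\th^m\in\HN$ by testing against the pre-annihilator of $\HN$ and only then converting to a decrease condition on $|f|$.) With~(a), (b), Lemma~\ref{lem:dyader} and the companion bounds in hand, the two Taylor arguments, the Lebesgue-null step and the density reduction are routine.
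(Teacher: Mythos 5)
The paper does not actually prove this lemma: it is imported wholesale from \cite{DSpb} (Lemma 2.2 and Theorem 2.4 there), so there is no internal argument to measure yours against. On its own terms, your treatment of (i)$\Rightarrow$(ii) is essentially complete. Your fact (a), first half, is correct and provable exactly as you indicate: writing $-\log|\th((1-t)\ze)|\ge c\,t\,|\th'(\ze)|$ for $t\le\tfrac12\dist(\ze,\specth)$ (using $|1-\bar z_j w|\asymp|\ze-z_j|$ and $\log(1/x)\ge\tfrac12(1-x^2)$ termwise), one either lands in $\Omte$ at $t\asymp_\eps|\th'(\ze)|^{-1}$, or else $\dist(\ze,\specth)\le C_\eps|\th'(\ze)|^{-1}$ and one picks $w$ near the nearest spectral point, where $\Omte$ accumulates. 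The Taylor step via Lemma \ref{lem:dyader} and \eqref{eqn:taylor}, and the measure-zero claim via $\log|f|\in L^1(\T)$, are both fine. The second half of (a) also checks out, since $(1-|z|)|\th'(\ze_0)|$ dominates $\sum_j\bigl(1-|b_{z_j}(z)|^2\bigr)+\log|S_\mu(z)|^{-2}$, which is bounded below on $\Omte$.

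The genuine gap is in (ii)$\Rightarrow$(i), and you have located it yourself but not closed it. Everything there hinges on the companion bounds $|f^{(j)}(\ze_0)|=O\bigl(|\th'(\ze_0)|^{-(N-j)}\bigr)$, and the Landau--Kolmogorov route needs $|\th'|\gtrsim|\th'(\ze_0)|$ on an arc of length $L\asymp|\th'(\ze_0)|^{-1}$ about $\ze_0$. That hypothesis fails in precisely the configuration you describe: if a single zero $z_1$ with $|\ze_0-z_1|=r$ and $1-|z_1|=h\asymp r^2/L\ll r$ carries most of $|\th'(\ze_0)|$, then at the far end of the arc $|\th'(\eta)|\asymp h/L^2\ll 1/L$, so (ii) no longer gives $|f|\lesssim L^N$ there; and retreating to the arc of length $r$ yields only $|f^{(j)}|\lesssim (r^2/h)^N r^{-j}$, which overshoots the target $(r^2/h)^{N-j}$ by the factor $(r/h)^j$. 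So the near/far splitting and dyadic iteration you defer to is not a cosmetic cleanup --- it is the substance of the proof, and you do not carry it out. Note also that the companion bounds, while true, are most naturally obtained as a corollary of the full equivalence (apply the already-proved (i)$\Rightarrow$(ii) to $f^{(j)}\in H^\infty_{N-j}$ after Lemma \ref{lem:dyader}), so extracting them directly from (ii) alone is exactly the crux and must be done without circularity. As you observe, \cite{DSpb} sidesteps this pointwise analysis by a duality argument; as it stands, your proposal establishes one implication and a correct framework, but not the lemma.
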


Finally, we list some of Shirokov's results from \cite{Shi1}. In particular, the next lemma comprises 
Theorems 1 and 3 of \cite{Shi1}, when specialized to the $\HN$ case. 

\begin{lem}\label{lem:shimult} Let $g\in\HN$, where $N\in\N$, and suppose $I$ is an inner function such 
that $g/I\in H^\infty$. Then $g/I\in\HN$. If, in addition, the zeros of $I$ in $\D$ -- if any -- are all 
of multiplicity at least $N$, then we also have $gI\in\HN$ (and hence $gI^k\in\HN$ for all $k\in\N$). 
\end{lem}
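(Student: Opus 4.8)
The plan is to isolate the statement from the cited Theorems 1 and 3 of \cite{Shi1}, so I only need to indicate how it follows from what is there. First I would recall that Shirokov's general results in \cite{Shi1} are stated for the whole scale of spaces $\Lambda^\omega_N$ (functions $f$ whose $N$th derivative lies in a Lipschitz class defined by a modulus of continuity $\omega$), and that the division property (\emph{f}-property) holds in each of these spaces: if $g\in\Lambda^\omega_N$, $I$ inner, and $g/I\in H^\infty$, then $g/I\in\Lambda^\omega_N$. The space $\HN$ is recovered by choosing $\omega\equiv1$ (or, more precisely, by passing to the limiting Hardy-space endpoint of that scale), so the first assertion $g/I\in\HN$ is a direct specialization. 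Strictly speaking, $\omega\equiv1$ is a degenerate modulus of continuity, so I would either cite the $H^\infty$-endpoint version of Shirokov's argument directly, or note that $\HN=\bigcap_{\omega}\Lambda^\omega_N$ over all admissible $\omega$ and apply the result in each $\Lambda^\omega_N$ simultaneously, the intersection then giving $\HN$.

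For the multiplication half, I would recall Shirokov's multiplication theorem (Theorem 3 of \cite{Shi1}): if $g\in\Lambda^\omega_N$ and $I$ is an inner function whose zeros all have multiplicity $\ge N$, then $gI\in\Lambda^\omega_N$; a singular inner factor raised to the $N$th power (or a singular inner function of the form $S_0^N$) is likewise harmless. The point is that the zeros of the \emph{inner} factor being of order at least $N$ forces the Taylor polynomial of $g$ of degree $N-1$ to vanish to the same order at each zero, so $gI$ inherits the required smoothness; the singular part is handled by the fact that one may extract an $N$th root of it among inner functions and apply Theorem A–type reasoning. Again I specialize $\omega$ to the $H^\infty$ endpoint. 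Once $gI\in\HN$ is known, iterating gives $gI^k\in\HN$ for all $k$, since $I^k$ also has all its zeros of multiplicity $\ge N$ (indeed $\ge kN$) and its singular part is an $N$th power; alternatively one simply applies the $k=1$ case repeatedly.

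The only real subtlety, and the step I would be most careful about, is the passage from Shirokov's modulus-of-continuity formulation to the clean $\HN$ statement: one must make sure the limiting case $\omega\equiv1$ is genuinely covered by his techniques and does not require the modulus to satisfy some nondegeneracy (Dini-type) hypothesis that $\omega\equiv1$ violates. In practice this is not an obstacle, because Shirokov explicitly treats the $\HN$ algebras in \cite{Shi1} (his Theorems 1 and 3 are stated there for exactly these spaces), so I would simply quote those theorems verbatim. Thus the lemma is nothing more than a restatement of \cite[Theorems 1 and 3]{Shi1} in the notation of the present paper, together with the trivial iteration for the powers $I^k$.
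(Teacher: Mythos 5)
Your proposal is correct and follows essentially the same route as the paper: the lemma is simply Theorems 1 and 3 of Shirokov's paper \cite{Shi1} specialized to the $\HN$ case, with the parenthetical claim about $gI^k$ obtained by iterating the $k=1$ statement (the paper notes exactly this, saying the last assertion ``follows readily from the preceding one by induction''). Your worry about the degenerate modulus of continuity is resolved the same way you resolve it yourself -- Shirokov treats the $\HN$ algebras explicitly -- so no further argument is needed.
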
 

The last statement in parentheses was not explicitly mentioned by Shirokov, but it follows readily from 
the preceding one by induction. We conclude by citing a restricted version of \cite[Lemma 4]{Shi1}. 

\begin{lem}\label{lem:shider} Given an inner function $\th$ and a point $\ze\in\T\setminus\specth$, write 
$$d_\th(\ze):=\dist(\ze,\,\specth)\qquad\text{and}\qquad
\tau_\th(\ze):=\min\left(d_\th(\ze),\,1/|\th'(\ze)|\right).$$ 
Then, for every $l\in\N$, there is a constant $c_l>0$ such that 
$$|\th^{(l)}(\ze)|\le c_l\cdot\{\tau_\th(\ze)\}^{-l},\qquad\ze\in\T\setminus\specth.$$
\end{lem}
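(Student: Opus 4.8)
The plan is to reduce the whole lemma to a single Cauchy estimate for $\th$ on a small disk centered at $\ze$, of radius comparable to $\tau_\th(\ze)$, on which $\th$ will turn out to be bounded by an absolute constant. Abbreviate $d:=d_\th(\ze)$ and $\tau:=\tau_\th(\ze)$. First I would record that $\th$ extends analytically to the disk $D(\ze,d)$. Writing $\th=B_{\{z_j\}}S_\mu$ and using the reflected continuation $\th(z)=1/\ov{\th(1/\bar z)}$ across $\T\setminus\specth$, the only singularities of the extended $\th$ are the zeros $z_j$, the mass of $\mu$, and the reflected poles $1/\bar z_j$; all of these lie at distance $\ge d$ from $\ze$, since $z_j$ and $\operatorname{supp}\mu$ lie in $\specth$ and, crucially for $\ze\in\T$, one has $|1-\bar z_j\ze|=|\ze-z_j|\ge d$ and $|1/\bar z_j-\ze|=|\ze-z_j|/|z_j|\ge d$. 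The case $l=1$ then needs nothing further: by definition $\tau\le1/|\th'(\ze)|$, so $|\th'(\ze)|\le\tau^{-1}$.

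The linchpin is an identity obtained by computing the inward normal derivative at $\ze$ of the harmonic function $-\log|\th|$, which vanishes on $\T\setminus\specth$; differentiating the canonical factorization radially gives
$$|\th'(\ze)|=\sum_j\f{1-|z_j|^2}{|\ze-z_j|^2}+2\int_\T\f{d\mu(\eta)}{|\ze-\eta|^2}.$$
Since every summand is nonnegative, this yields the two \emph{a priori} bounds $\sum_j(1-|z_j|^2)|\ze-z_j|^{-2}\le|\th'(\ze)|\le\tau^{-1}$ and $\int_\T|\ze-\eta|^{-2}\,d\mu(\eta)\le\tfrac12\tau^{-1}$. These encode exactly the cancellation that makes the lemma true; estimating the individual Blaschke factors or the Poisson integral term by term would only give the far weaker (and useless) bound $O(d^{-2}\cdot\text{mass})$.

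With these bounds I would establish a lower bound $|\th(w)|\ge c>0$, with $c$ absolute, for all $w\in\D$ with $|w-\ze|\le\tau/16$. Writing $-\log|\th(w)|=\sum_j\log\rho(w,z_j)^{-1}+\int_\T P(w,\eta)\,d\mu(\eta)$ with $P(w,\eta)=(1-|w|^2)|\eta-w|^{-2}$, I note that for such $w$ one has $|1-\bar z_jw|\ge\tfrac12|\ze-z_j|$, $|w-\eta|\ge\tfrac12|\ze-\eta|$, and $1-|w|^2\le2|w-\ze|\le\tau/8$. Hence each Blaschke quantity $x_j:=1-\rho(w,z_j)^2=(1-|w|^2)(1-|z_j|^2)|1-\bar z_jw|^{-2}$ satisfies $x_j\le\tfrac{\tau}{2}(1-|z_j|^2)|\ze-z_j|^{-2}\le\tfrac{\tau}{2}|\th'(\ze)|\le\tfrac12$, so that $\log\rho^{-1}=-\tfrac12\log(1-x_j)\le(\log2)\,x_j$ and the Blaschke sum is $\le(\log2)\sum_jx_j=O(\tau|\th'(\ze)|)=O(1)$; the Poisson integral is handled identically via the second \emph{a priori} estimate. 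Exponentiating gives $|\th(w)|\ge c$. Finally I reflect: for $|z|>1$ with $|z-\ze|\le\tau/32$ one has $|\th(z)|=1/|\th(1/\bar z)|$ with $1/\bar z\in\D$ and $|1/\bar z-\ze|=|z-\ze|/|z|<\tau/16$, whence $|\th(z)|\le1/c$; together with $|\th|\le1$ in $\D$ this gives $\sup_{|z-\ze|\le\tau/32}|\th|\le1/c$. A Cauchy estimate on the circle $|z-\ze|=\tau/32$ then yields $|\th^{(l)}(\ze)|\le l!\,(32/\tau)^l(1/c)=c_l\tau^{-l}$, with $c_l$ absolute.

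The main obstacle is precisely the middle step: securing the uniform bound $x_j\le\tfrac12$ and the resulting $O(1)$ estimate for the Blaschke sum. This cannot be done factor by factor and relies entirely on the normal-derivative identity, which converts the single derivative $|\th'(\ze)|$—already controlled by $\tau^{-1}$—into summable control over the whole zero set and the singular mass near $\ze$. Once that cancellation is in hand, the reflection and Cauchy steps are routine.
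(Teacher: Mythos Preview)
Your argument is correct. The normal-derivative identity
\[
|\th'(\ze)|=\sum_j\f{1-|z_j|^2}{|\ze-z_j|^2}+2\int_\T\f{d\mu(\eta)}{|\ze-\eta|^2}
\]
is the right tool here, and your use of it to bound $-\log|\th(w)|$ uniformly on $\{w\in\D:|w-\ze|\le\tau/16\}$ is carried out cleanly: the estimates $|1-\bar z_jw|\ge\tfrac12|\ze-z_j|$, $1-|w|^2\le\tau/8$, the convexity bound $-\log(1-x)\le(2\log2)x$ on $[0,\tfrac12]$, and the reflection/Cauchy step all check out. One cosmetic remark: in the first paragraph the zeros $z_j$ are of course not singularities of the extended $\th$; only the reflected poles $1/\bar z_j$ and the points of $\operatorname{supp}\mu$ (together with the accumulation set of $\{z_j\}$ on $\T$, which lies in $\specth$) matter for analyticity on $D(\ze,d)$. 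This does not affect the validity of the argument.

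As for comparison with the paper: there is nothing to compare. The paper does not prove Lemma~\ref{lem:shider}; it merely quotes it as a restricted version of \cite[Lemma~4]{Shi1}. Your proposal therefore supplies what the paper omits, namely a self-contained proof. In spirit your approach is the standard one (and is essentially how Shirokov argues): obtain a uniform upper bound for $|\th|$ on a disk of radius $\asymp\tau_\th(\ze)$ around $\ze$ via reflection and the angular-derivative formula, then apply Cauchy's inequality.
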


\section{Proofs of Theorems \ref{thm:pervaya} and \ref{thm:tseloe}} 

\noindent{\it Proof of Theorem \ref{thm:pervaya}.} 
The implications (i.1)$\implies$(iii.1) and (ii.1)$\implies$(iv.1) are obvious. It is also clear 
that (iii.1) implies (iv.1), since $\J_{n+1}$ is an ideal subspace of $\Al$ (by Theorem A). 

\smallskip (iv.1)$\implies$(i.1). Let $f\in\J_n\setminus\J_{n+1}$, so that 
\begin{equation}\label{eqn:belnotbel}
fB^n\in\Al\quad\text{\rm but}\quad fB^{n+1}\notin\Al. 
\end{equation}
It follows that, for $k=1,\dots,n$, the function $g_k:=fB^{n-k}$ is in $\J_k$, while 
$Bg_k=fB^{n-k+1}$ is in $\Al\setminus\J_k$. Since $|Bg_k|\le|g_k|$ in $\D$, we conclude that 
$\J_k$ is nonideal. 

\smallskip (iv.1)$\implies$(ii.1). Once again, let $f\in\J_n\setminus\J_{n+1}$ and $g_k=fB^{n-k}$. 
For $k=1,\dots,n$, \eqref{eqn:belnotbel} tells us that $g_k\in\J_k\setminus\J_{k+1}$, so 
the inclusion $\J_{k+1}\sb\J_k$ is proper. 

\smallskip We now know that (i.1)$\iff$(ii.1)$\iff$(iii.1)$\iff$(iv.1), so it is the equivalence 
between (iv.1) and (v.1) that remains to be proved. 

\smallskip (iv.1)$\implies$(v.1). Suppose (v.1) fails, so that 
\begin{equation}\label{eqn:notfive}
\sup_j\f{d_j}{1-|z_j|}<\infty, 
\end{equation}
and let $f\in\J_n$. The function $g:=fB^n$ is then in $\Al$, and 
$$g(z_j)=g'(z_j)=\dots=g^{(n-1)}(z_j)=0$$ 
for all $j$. The inequality \eqref{eqn:taylor} with $s=0$ and $w=z_j$ therefore yields 
\begin{equation}\label{eqn:penis}
\left|g(z)-\f{g^{(n)}(z_j)}{n!}(z-z_j)^n\right|\le C|z-z_j|^\al,\qquad z\in\D.
\end{equation}
In particular, for any fixed $j$ and for $z=z_l$ ($l\ne j$), this gives 
$$\f{\left|g^{(n)}(z_j)\right|}{n!}\le C|z_l-z_j|^{\al-n}.$$ 
Taking the infimum over $l\in\N\setminus\{j\}$ and recalling \eqref{eqn:notfive}, we get 
\begin{equation}\label{eqn:vagina}
\f{\left|g^{(n)}(z_j)\right|}{n!}\le Cd_j^{\al-n}\le\widetilde C(1-|z_j|)^{\al-n}, 
\end{equation}
with a suitable constant $\widetilde C>0$. 
\par Now suppose $z\in\Om(B,\eps)$, where $\eps>0$ is appropriately small. Since $B$ is an 
interpolating Blaschke product, it follows (cf. \cite[Chapter X, Lemma 1.4]{G}) that there is 
a number $\la\in(0,1)$ and a zero $z_j$ of $B$ such that $\rho(z,z_j)<\la$. Here, both $\eps$ 
and $\la$ can be taken to depend only on the \lq Carleson constant' $\de=\de(B)$, defined as 
the infimum in \eqref{eqn:carl}. The inequality $\rho(z,z_j)<\la$ then implies that 
\begin{equation}\label{eqn:euc}
|z-z_j|\le c(1-|z|)\quad\text{\rm and}\quad(1-|z_j|)\le c(1-|z|)
\end{equation}
for some $c=c(\de)>0$. In view of \eqref{eqn:penis}, we have 
$$|g(z)|\le\f{|g^{(n)}(z_j)|}{n!}|z-z_j|^n+C|z-z_j|^\al,$$ 
and combining this with \eqref{eqn:vagina} and \eqref{eqn:euc}, we finally obtain 
$$|g(z)|\le\const\cdot(1-|z|)^\al,\qquad z\in\Om(B,\eps).$$ 
Theorem A now tells us that $gB^k\in\La^\al$ for all $k\in\Z$. In particular, $gB=fB^{n+1}\in\Al$ 
and so $f\in\J_{n+1}$. 
\par We have thus checked that \eqref{eqn:notfive} implies the inclusion $\J_n\sb\J_{n+1}$, and 
hence the equality $\J_n=\J_{n+1}$, contradicting (iv.1). 

\smallskip (v.1)$\implies$(iv.1). Since $\text{\rm clos\,}\{z_j\}$ is an $\Al$-interpolating set, 
we can apply Lemma \ref{lem:admis} with $k=n$ to find a function $g\in\Al$ satisfying 
\begin{equation}\label{eqn:intlastder}
g(z_j)=g'(z_j)=\dots=g^{(n-1)}(z_j)=0,\quad g^{(n)}(z_j)=d_j^{\al-n}
\end{equation}
for all $j$. This $g$ is then divisible by $B^n$, so that $g=fB^n$ with $f\in H^\infty$. In fact, 
we have $f\in\Al$ (because $\Al$ enjoys the $f$-property, see Section 1) and hence $f\in\J_n$. 
Next, we use Lemma \ref{lem:fbm} to rewrite the last equality from \eqref{eqn:intlastder} as 
$$f(z_j)\cdot(B^n)^{(n)}(z_j)=d_j^{\al-n}.$$ 
In view of (the trivial part of) Lemma \ref{lem:blann}, or just because $B^n\in H^\infty$, it follows that 
$$|f(z_j)|(1-|z_j|)^{-n}\ge\const\cdot d_j^{\al-n}.$$ 
Consequently, 
$$\sup_j\f{|f(z_j)|}{(1-|z_j|)^\al}\ge\const\cdot
\sup_j\left(\f{d_j}{1-|z_j|}\right)^{\al-n}=\infty,$$
where we have also used (v.1). Finally, we invoke Theorem A (specifically, the (i.A)$\implies$(iv.A) part 
with $\th=B$ and $m=n+1$) to conclude that $fB^{n+1}\notin\Al$. Thus $f\in\J_n\setminus\J_{n+1}$, 
which yields (iv.1) and completes the proof. \quad\qed

\medskip To prove Theorem \ref{thm:tseloe}, we may proceed in quite a similar fashion, as soon as we have 
Theorem \ref{thm:factint} at our disposal. Taking the latter result for granted and postponing its verification 
to the next section, we now describe the passage from the proof of Theorem \ref{thm:pervaya} above to that 
of Theorem \ref{thm:tseloe}. Basically, this reduces to the following adjustments. Throughout, change the tags 
(i.1), $\dots$, (v.1) to (i.2), $\dots$, (v.2), respectively; replace $\J_k$ by $I_k$, $\Al$ by $\hnone$, 
and $\al$ (except in $\Al$) by $n+1$; instead of Theorem A, refer to Theorem \ref{thm:factint} with $N=n+1$. 
Finally, a minor modification is needed to check that the set $\Om(B,\eps)$ is again contained in 
$\bigcup_j\{z:\rho(z,z_j)<\la\}$ for some $\eps$ and $\la$ in $(0,1)$. This time, one writes $B=B_1B_2$, 
where $B_1$ and $B_2$ are interpolating Blaschke products (see Section 2), and notes that 
$$\Om(B,\eps)\sb\Om(B_1,\sqrt\eps)\cup\Om(B_2,\sqrt\eps).$$

\section{Proof of Theorem \ref{thm:factint}} 

(i.3)$\implies$(ii.3). This implication is a consequence of Shirokov's results that are collected in Lemma 
\ref{lem:shimult}. Indeed, the last assertion of the lemma, when applied to $g:=f\th^m$ and $I:=\th^m$, 
tells us that $f\th^{km}\in\HN$ for all $k\in\N$. Together with the fact that $\HN$ enjoys the $f$-property 
(which is also contained in Lemma \ref{lem:shimult}), this yields (ii.3). 

\smallskip (ii.3)$\implies$(iii.3). This is obvious, since $\HN\sb\bmoa_N$. 

\smallskip (iii.3)$\implies$(iv.3). It clearly follows from (iii.3) that $f\th^{N+1}$ lies in $\bmoa_N$ and 
hence, {\it a fortiori}, in the (higher order) Zygmund class 
$$A^N=\left\{f\in H^\infty:\,|f^{(N+1)}(z)|=O\left((1-|z|)^{-1}\right),\,\,z\in\D\right\}.$$ 
To verify the inclusion $\bmoa_N\sb A^N$, as used here, recall that $\bmoa$ is contained in the Bloch space 
$\mathcal B$ (see \cite[Chapter VI]{G}). We have thus checked that $f\th^{N+1}\in A^N$; this done, we apply 
the (i.A)$\implies$(iv.A) part of Theorem A, with $\al=N$ and $m=N+1$, to arrive at (iv.3). 

\smallskip (iv.3)$\implies$(i.3). First let us prove that (iv.3) implies $f\th\in\HN$. We want to show that 
the function 
\begin{equation}\label{eqn:revleib}
(f\th)^{(N)}=\sum_{l=0}^N{N\choose l}f^{(N-l)}\th^{(l)}
\end{equation}
is bounded, and our plan is to check this for (the boundary values of) each summand involved. 
\par Using (iv.3) and Lemma \ref{lem:dyader}, we obtain 
\begin{equation}\label{eqn:derlevel}
|f^{(N-l)}(z)|=O\left((1-|z|)^{l}\right),\qquad z\in\Om(\th,\,\eps/2), 
\end{equation}
for $l=0,\dots,N$. Since $f^{(N-l)}\in H^\infty_l$, Lemma \ref{lem:dyashi} enables us to rewrite the 
preceding estimate as 
\begin{equation}\label{eqn:revest}
|f^{(N-l)}(\ze)|=O\left((1/|\th'(\ze)|^l\right),\qquad\ze\in\T\setminus\specth.
\end{equation}
We may of course assume that $f\not\equiv0$, so Lemma \ref{lem:dyashi} tells us also that $\T\setminus\specth$ 
is a set of full measure on $\T$. 
\par Now let $\ze\in\T\setminus\specth$, and let $\ze^*\in\specth$ be a point with 
$$|\ze-\ze^*|=\dist(\ze,\,\specth)=:d_\th(\ze).$$ 
We have then 
\begin{equation}\label{eqn:revtaylor}
\left|f^{(N-l)}(\ze)-\sum_{j=0}^{l-1}\f{f^{(N-l+j)}(\ze^*)}{j!}(\ze-\ze^*)^j\right|\le C|\ze-\ze^*|^l
=C\,d^l_\th(\ze),
\end{equation}
and hence 
\begin{equation}\label{eqn:revtay}
\left|f^{(N-l)}(\ze)\right|\le C\,d^l_\th(\ze)+\sum_{j=0}^{l-1}\f{|f^{(N-l+j)}(\ze^*)|}{j!}d^j_\th(\ze).
\end{equation}
(The admissible values of $l$ are again $0,1,\dots,N$. When $l=0$, it is understood that the sums in 
\eqref{eqn:revtaylor} and \eqref{eqn:revtay} equal $0$; the estimates then reduce to saying that 
$|f^{(N)}(\ze)|\le C$ for almost all $\ze\in\T$, in accordance with the hypothesis that $f\in\HN$.)  
\par Recalling that $\ze^*\in\specth\sb\text{\rm clos}\,\Om(\th,\,\eps/2)$, we now combine \eqref{eqn:derlevel} 
with the obvious inequality $1-|\ze^*|\le d_\th(\ze)$ to get 
\begin{equation}\label{eqn:nminusl}
\left|f^{(N-l+j)}(\ze^*)\right|\le\tilde C\left(1-|\ze^*|\right)^{l-j}\le\tilde C\cdot d_\th^{l-j}(\ze),
\qquad j=0,\,\dots,\,l-1,
\end{equation}
where $\tilde C$ is a suitable constant. Substituting the resulting estimate from \eqref{eqn:nminusl} 
into \eqref{eqn:revtay}, we see that 
$$\left|f^{(N-l)}(\ze)\right|\le\const\cdot d^l_\th(\ze).$$
Comparing this with \eqref{eqn:revest} gives 
\begin{equation}\label{eqn:crunl}
\left|f^{(N-l)}(\ze)\right|\le\const\cdot\tau^l_\th(\ze),\qquad\ze\in\T\setminus\specth,
\end{equation}
where 
$$\tau_\th(\ze):=\min\left(d_\th(\ze),\,1/|\th'(\ze)|\right).$$ 
In conjunction with Lemma \ref{lem:shider}, this shows that the products $f^{(N-l)}\th^{(l)}$ appearing on the 
right side of \eqref{eqn:revleib} are all essentially bounded on $\T$. Therefore, $f\th$ is indeed in $\HN$, 
as desired. 
\par Finally, proceeding by induction, we use the already established implication 
\begin{equation}\label{eqn:revimp}
\text{\rm(iv.3)}\implies f\th\in\HN 
\end{equation}
to deduce that (iv.3) actually implies (ii.3). To this end, we just apply \eqref{eqn:revimp} with $f$ successively 
replaced by $f\th$, $f\th^2$, etc. And since (ii.3) trivially implies (i.3), we are done. 

\section{Proof of Theorem \ref{thm:vtoraya}}

(a) Let $f\in\J_k$, where $n/2<k\le n$, and set $g:=fB^k$. We have then $g\in\Al$ and 
\begin{equation}\label{eqn:deepzero}
g(z_j)=g'(z_j)=\dots=g^{(k-1)}(z_j)=0,\qquad j\in\N.
\end{equation}
The inequalities \eqref{eqn:taylor}, applied with $z=z_l$ and $w=z_j$ ($j\ne l$), yield 
$$\left|g^{(s)}(z_l)-\sum_{m=s}^n\f{g^{(m)}(z_j)}{(m-s)!}(z_l-z_j)^{m-s}\right|\le C|z_l-z_j|^{\al-s}$$ 
for every integer $s$ in $[0,n]$. Now, for $s=0,\dots,n-k$, this further reduces to 
\begin{equation}\label{eqn:obama}
\left|\sum_{m=k}^n\f{g^{(m)}(z_j)}{(m-s)!}(z_l-z_j)^{m-s}\right|\le C|z_l-z_j|^{\al-s},
\end{equation}
in view of \eqref{eqn:deepzero}. (Note that the current values of $s$ do not exceed $k-1$, thanks to 
the assumption $k>n/2$.) Multiplying both sides of \eqref{eqn:obama} by $|z_l-z_j|^{s-k}$, we get 
\begin{equation}\label{eqn:babayaga}
\left|\sum_{m=k}^n\f{g^{(m)}(z_j)}{(m-s)!}(z_l-z_j)^{m-k}\right|\le C|z_l-z_j|^{\al-k}
\qquad(s=0,\dots,n-k). 
\end{equation}
Next, keeping $j$ and $l$ fixed (with $j\ne l$), we write 
\begin{equation}\label{eqn:system}
\sum_{m=k}^n\f{g^{(m)}(z_j)}{(m-s)!}(z_l-z_j)^{m-k}=:R_s\qquad(s=0,\dots,n-k). 
\end{equation}
We shall view \eqref{eqn:system} as a system of $n-k+1$ linear equations with the \lq unknowns' 
$$g^{(k)}(z_j),\quad g^{(k+1)}(z_j)\cdot(z_l-z_j),\quad\dots,\quad g^{(n)}(z_j)\cdot(z_l-z_j)^{n-k}$$ 
and with \lq constant terms' $R_0,\dots,R_{n-k}$. The coefficient matrix that arises, say $\M$, is then 
nonsingular. Indeed, 
$$\M=\M(k,n):=
\begin{pmatrix}
\f1{k!}&\f1{(k+1)!}&\dots&\f1{n!}\\
\f1{(k-1)!}&\f1{k!}&\dots&\f1{(n-1)!}\\
\dots&\dots&\dots&\dots\\ 
\f1{(2k-n)!}&\f1{(2k-n+1)!}&\dots&\f1{k!}
\end{pmatrix},$$ 
and one verifies that $\det\M\ne0$ (e.\,g., computing the determinant explicitly) by induction on $n-k$. 
By Cramer's rule, 
\begin{equation}\label{eqn:solution}
g^{(k)}(z_j)=\f{\det\M_1}{\det\M},
\end{equation}
where $\M_1$ is obtained from $\M$ by replacing its first column with $(R_0,\dots,R_{n-k})^T$. 
\par Since 
$$|R_s|\le\const\cdot|z_l-z_j|^{\al-k}\qquad(s=0,\dots,n-k),$$ 
as ensured by \eqref{eqn:babayaga}, while the entries of $\M$ depend only on $k$ and $n$, it follows 
that $\det\M_1$ admits a similar estimate. (To see why, expand the determinant along the first column.) 
Consequently, by \eqref{eqn:solution}, we also have 
$$\left|g^{(k)}(z_j)\right|\le\const\cdot|z_l-z_j|^{\al-k},$$ 
with a constant not depending on $j$ and $l$. Taking the infimum over $l\in\N\setminus\{j\}$ and 
noting that 
$$g^{(k)}(z_j)=f(z_j)\cdot(B^k)^{(k)}(z_j)$$
(by Lemma \ref{lem:fbm}), we deduce that 
$$|f(z_j)|\cdot|(B^k)^{(k)}(z_j)|\le\const\cdot d_j^{\al-k},\qquad j=1,2,\dots.$$ 
Finally, we combine this with the estimate 
$$|(B^k)^{(k)}(z_j)|\ge\const\cdot(1-|z_j|)^{-k}$$ 
(from Lemma \ref{lem:blann}) to arrive at \eqref{eqn:decr}. 

\smallskip (b) Since $\text{\rm clos\,}\{z_j\}$ is an $\Al$-interpolating set, 
Lemma \ref{lem:admis} enables us to solve the interpolation problem 
\begin{equation}\label{eqn:intkthder}
g^{(s)}(z_j)=0\quad(0\le s\le n,\,\,s\ne k),\quad g^{(k)}(z_j)=d_j^{\al-k},\qquad j\in\N, 
\end{equation}
with a function $g\in\Al$. This $g$ is therefore divisible by $B^k$, so that $g=fB^k$ with 
$f\in H^\infty$. We know that $f$ is then actually in $\Al$ and hence in $\J_k$. Furthermore, in 
view of Lemma \ref{lem:fbm}, the equality $g^{(k)}(z_j)=d_j^{\al-k}$ from \eqref{eqn:intkthder} 
takes the form 
$$f(z_j)\cdot(B^k)^{(k)}(z_j)=d_j^{\al-k}.$$ 
This, in conjunction with Lemma \ref{lem:blann}, implies that 
$$|f(z_j)|\asymp d_j^{\al-k}(1-|z_j|)^k,$$ 
as desired. 

\medskip


\begin{thebibliography}{12}

\bibitem{C} L. Carleson, {\it Sets of uniqueness for functions regular 
in the unit circle}, Acta Math. \textbf{87} (1952), 325--345. 

\bibitem{DSpb} K. M. Dyakonov, {\it Smooth functions and coinvariant subspaces of 
the shift operator}, Algebra i Analiz \textbf{4} (1992), no. 5, 117--147; English 
transl. in St. Petersburg Math. J. \textbf{4} (1993), 933--959. 

\bibitem{DAmer} K. M. Dyakonov, {\it Division and multiplication by inner 
functions and embedding theorems for star-invariant subspaces}, Amer. J. Math. 
\textbf{115} (1993), 881--902. 

\bibitem{DScand} K. M. Dyakonov, {\it Multiplication by Blaschke products and 
stability of ideals in Lipschitz algebras}, Math. Scand. \textbf{73} (1993), 246--258. 

\bibitem{DActa} K. M. Dyakonov, {\it Equivalent norms on Lipschitz-type spaces 
of holomorphic functions}, Acta Math. \textbf{178} (1997), 143--167. 

\bibitem{DAdv} K. M. Dyakonov, {\it Holomorphic functions and quasiconformal 
mappings with smooth moduli}, Adv. Math. \textbf{187} (2004), 146--172. 

\bibitem{Djfa} K. M. Dyakonov, {\it Self-improving behaviour of inner functions as 
multipliers}, J. Funct. Anal. \textbf{240} (2006), 429--444. 

\bibitem{Dyn1} E. M. Dyn'kin, {\it Free interpolation sets for H\"older classes},  
Mat. Sb. (N.S.) \textbf{109 (151)} (1979), no. 1, 107--128; English transl. in 
Math. USSR Sbornik \textbf{37} (1980), 97--117. 

\bibitem{Dyn2} E. M. Dyn'kin, {\it The pseudoanalytic extension}, J. Anal. Math. 
\textbf{60} (1993), 45--70. 

\bibitem{G} J. B. Garnett, {\it Bounded Analytic Functions}, Academic 
Press, New York, 1981. 

\bibitem{H} V. P. Havin, {\it On factorization of analytic functions 
that are smooth up to the boundary}, Zap. Nauchn. Sem. Leningrad. Otdel. 
Mat. Inst. Steklov. (LOMI) \textbf{22} (1971), 202--205 (Russian). 

\bibitem{Shi1} N. A. Shirokov, {\it Division and multiplication by inner functions 
in spaces of analytic functions smooth up to the boundary}, in: Complex Analysis 
and Spectral Theory (Leningrad, 1979/1980), 413--439, Lecture Notes in Math., 864, 
Springer, Berlin, 1981. 

\bibitem{ShiMS} N. A. Shirokov, {\it Free interpolation in the spaces $C^A_{r,\om}$}, 
Mat. Sb. (N.S.) \textbf{117 (159)} (1982), no. 3, 337--358; English transl. in 
Math. USSR Sbornik \textbf{45} (1983), 337--358. 

\bibitem{Shi2} N. A. Shirokov, {\it Analytic Functions Smooth up to the Boundary}, 
Lecture Notes in Math., 1312, Springer, Berlin, 1988. 

\end{thebibliography}
\end{document}